\newtheorem{theorem}{Theorem}[section]
\newtheorem{definition}[theorem]{Definition}
\newtheorem{lemma}[theorem]{Lemma}
\newtheorem{claim}[theorem]{Claim}
\newtheorem{conjecture}[theorem]{Conjecture}
\newtheorem{problem}[theorem]{Problem}
\newcommand{\body}{paper\xspace}
\newcommand{\Hyper}[1]{\mathcal{#1}}
\newcommand{\ILF}[2]{{#1}/{#2}}
\newcommand{\CompHyper}[2]{\Hyper{K}_{#1} ^{(#2)}}
\begin{document}

\title{A linear bound on the Manickam-Mikl\'os-Singhi Conjecture}

\author{\large{Alexey Pokrovskiy} \thanks{Research supported by the LSE postgraduate research studentship scheme.}% \footnote{This paper is a draft.  If you want to pass a copy on to someone, please consult the author first.}
\\
\\Department of Mathematics,
\\ {London School of Economics and Political Sciences,} 
\\ London WC2A 2AE, UK 
\\ {Email: \texttt{a.pokrovskiy@lse.ac.uk}}}

\maketitle

\begin{abstract}
Suppose that we have a set of numbers $x_1, \dots, x_n$ which have nonnegative sum.  How many subsets of $k$ numbers from $\{x_1, \dots, x_n\}$ must have nonnegative sum?  Manickam, Mikl\'os, and Singhi conjectured that for $n\geq 4k$ the answer is $\binom{n-1} {k-1}$.  This conjecture is known to hold when $n$ is large compared to $k$.  The best known bounds are due to Alon, Huang, and Sudakov who proved the conjecture when $n\geq 33k^2$.  In this paper we improve this bound by showing that there is a constant $C$ such that the conjecture holds when $n\geq C k$ .  
\end{abstract}

\section{Introduction}
Suppose that we have a set of numbers $x_1, \dots, x_n$ satisfying $x_1+ \dots+ x_n\geq 0$.  How many subsets $A\subset \{x_1, \dots, x_n\}$ must satisfy $\sum_{a\in A} a\geq 0$?

By choosing $x_1=n-1$ and $x_2=\dots=x_n=-1$ we see that the answer to this question can be at most $2^{n-1}$.
In fact, this example has the minimal number of nonnegative sets.  Indeed, for any set $A\subset \{x_1, \dots, x_n\}$ either $A$ or $\{x_1, \dots, x_n\}\setminus A$ must have nonnegative sum, so there must always be at least $2^{n-1}$ nonnegative subsets in any set of numbers $\{x_1, \dots, x_n\}$ with nonnegative sum.  

A more difficult  problem arises if we count only subsets of fixed order.  By again considering the example when $x_1=n-1$ and $x_2=\dots=x_n=-1$ we see that there are sets of $n$ numbers with nonnegative sums which have only $\binom{n-1} {k-1}$ nonnegative $k$-sums (sums of $k$ distinct numbers).
Manickam, Mikl\'os, and Singhi conjectured that for $n\geq 4k$ this assignment gives the least possible number of nonnegative $k$-sums.
\begin{conjecture}  [Manickam, Mikl\'os, Singhi, \cite{Manickam, Singhi}] \label{MMSConjecture}
Suppose that $n\geq 4k$, and we have $n$ real numbers $x_1, \dots, x_n$ such that $x_1 + \dots + x_n \geq 0$. Then, at least $\binom{n-1}  {k-1}$ subsets $A\subset \{x_1, \dots, x_n\}$ of order $k$ satisfy $\sum_{a\in A} a\geq 0$
\end{conjecture}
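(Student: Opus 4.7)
Without loss of generality, order the numbers so that $x_1 \geq x_2 \geq \cdots \geq x_n$, and let $\mathcal{N} \subseteq \binom{[n]}{k}$ denote the family of $k$-subsets whose sum is strictly negative. Using Pascal's identity $\binom{n}{k} = \binom{n-1}{k-1} + \binom{n-1}{k}$, the conjecture is equivalent to the upper bound $|\mathcal{N}| \leq \binom{n-1}{k}$. The plan is to perform a case analysis driven by how strongly the largest element $x_1$ offsets the total negative mass $M := -\sum_{x_i<0} x_i$; after a small perturbation of the $x_i$, we may further assume that $\sum_i x_i = 0$ (so $M = \sum_{x_i \geq 0} x_i$) and that no $k$-sum is exactly zero, which eliminates boundary cases.

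\textbf{Step 1 (Dominant case).} Suppose $x_1 \geq (1-\delta) M$ for a small constant $\delta > 0$ to be fixed. Here $x_1$ is nearly large enough to single-handedly cancel all of the negative mass, exactly as in the extremal example. A random $(k-1)$-subset of $\{x_2,\dots,x_n\}$ has expected sum $-x_1/(n-1)$, which is small in absolute value compared to $x_1$ once $n \geq Ck$. A second-moment or Chernoff-type concentration inequality then shows that all but a tiny fraction of the $\binom{n-1}{k-1}$ $k$-subsets through $x_1$ have nonnegative sum. The remaining subsets not containing $x_1$ contribute at most $\binom{n-1}{k}$ negative sums trivially, and the small slack in the first bound absorbs any loss.

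\textbf{Step 2 (Balanced case).} If no element dominates, i.e.\ $x_1 < (1-\delta)M$, then the positive mass is spread across several elements and the cancellation structure is more subtle. The approach I would use here is to average over uniformly random partitions of $[n]$ into $\lfloor n/k \rfloor$ blocks of size $k$ (analogously to the Manickam–Mikl\'os cyclic-ordering trick, but with independent blocks): since the total sum is $\geq 0$, in each partition at least one block has nonnegative sum, and double-counting over all partitions gives a lower bound on $|\binom{[n]}{k} \setminus \mathcal{N}|$. To upgrade this bound to the correct constant one needs to exploit that in the balanced regime no single block can be ``exceptionally positive'', so many blocks must be nonnegative per partition; this should be combined with a shadow/Kruskal–Katona estimate applied to the downward closure of $\mathcal{N}$.

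\textbf{Main obstacle.} The hardest step is undoubtedly the balanced case, and in particular pushing the threshold for $n$ down from polynomial in $k$ to linear. Crude averaging over partitions loses a factor of roughly $n/k$, which is precisely what one needs to avoid. The key new input needed is a stability-type result of the form ``either $\mathcal{N}$ is essentially contained in a star around the most negative element (reducing to the dominant case by symmetry), or $\mathcal{N}$ is genuinely spread out, in which case sharper hypergraph-density arguments apply.'' Establishing such a dichotomy at the level $n = Ck$, rather than at $n = Ck^2$ as in the Alon–Huang–Sudakov argument, is the principal technical challenge and is where I would expect the main novelty of the proof to lie.
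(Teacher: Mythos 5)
Your plan does not prove the statement, and the gaps are substantive rather than cosmetic. First, note that the statement is the full conjecture for $n\geq 4k$; neither your sketch nor this paper establishes that range (the paper proves only $n\geq 10^{46}k$), and your own Step 1 already assumes $n\geq Ck$, so at best you are targeting the linear-range theorem. Within that target, Step 1 is unsound as written: the $x_i$ are arbitrary reals, so no Chernoff or second-moment bound applies to a random $(k-1)$-subset without control on individual magnitudes. In the dominant case there may be a single element of size roughly $-M\approx -x_1$; it lies in a $(k-1)/(n-1)$ fraction of the $(k-1)$-subsets of $\{x_2,\dots,x_n\}$, so on the order of $\binom{n-2}{k-2}$ of the $k$-sets through $x_1$ can be negative, which is far from a ``tiny fraction'' that slack absorbs. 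Worse, the bookkeeping goes the wrong way: every set through $x_1$ that you lose must be compensated by a nonnegative set avoiding $x_1$, and producing those requires a genuine argument (this is exactly where Alon--Huang--Sudakov do real work). Step 2 is not an argument at all: averaging over partitions into $n/k$ blocks yields only one nonnegative block per partition, losing the factor $n/k$ you identify, and the ``stability dichotomy'' you invoke to repair this is precisely the unproved main step --- you acknowledge as much, so the central case of your plan is open.

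The paper's route is entirely different and avoids the dominant/balanced dichotomy. It first proves a Katona-style averaging lemma: if some $d$-regular $k$-uniform hypergraph on $n$ vertices has the property that every vertex-weighting with nonnegative total sum produces at least $d$ nonnegative edges (the MMS-property), then the conjectured bound $\binom{n-1}{k-1}$ holds for the complete hypergraph with those $n$ and $k$. It then constructs a sparse $k(k-1)^2$-regular hypergraph of ``double intervals'' on $\mathbb{Z}_n$ and shows it has the MMS-property for $n\geq 10^{46}k$, via two structural claims (small intervals extend to negative intervals; long intervals with few nonnegative edges are negative) whose proofs rest on interval decompositions and additive combinatorics --- Kneser's theorem and a theorem of Grynkiewicz --- rather than concentration, partition averaging, or Kruskal--Katona. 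To salvage your outline you would need, at minimum, a correct treatment of few-but-very-negative elements in Step 1 and a complete substitute for the missing dichotomy in Step 2; as it stands, the proposal is a research plan, not a proof.
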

Conjecture~\ref{MMSConjecture} appeared in~\cite{Singhi} where it was phrased in terms of calculating invariants of an association scheme known as the \emph{Johnson Scheme}.  In~\cite{Manickam}, Conjecture~\ref{MMSConjecture} was phrased in the combinatorial form in which it is stated above.

A motivation for the bound ``$n\geq 4k$'' is that for $k\geq 3$ and  $n=3k+1$ there exists an assignment of values to $x_1, \dots, x_{3k+1}$ which results in less than $\binom{n-1} {k-1}$ nonnegative $k$-sums.  Indeed, letting $x_1=x_2=x_3=2-3k$ and $x_4=\dots=x_{3k+1}=3$ gives an assigment satisfying $x_1+\dots+x_{3k+1}=0$ but having ${\binom{3k-2}{k}}$ nonnegative $k$-sums, which is less than ${\binom{3k} {k-1}}$ for $k\geq 3$.

Conjecture~\ref{MMSConjecture} has been open for over two decades, and many partial results have been proven.  The conjecture has been proven for $k\leq 3$ by Manickam \cite{ManickamSmalln} and independently by Chiaselotti and Marino \cite{Marino}.  It has been proven whenever $n\equiv 0 \pmod{k}$ by Manickam and Singhi \cite{Singhi}.
In addition several results have been proved establishing the conjecture when $n$ is large compared to $k$.  Manickam and Mikl\'os \cite{Manickam} showed that the conjecture holds when $n\geq (k - 1)(k^k + k^2 ) + k$ holds.  Tyomkyn~\cite{Tyomkyn} improved this bound to  $n\geq k(4e \log k)^k \approx e^{ck\log \log k}$.  Recently Alon, Huang, and Sudakov \cite{Sudakov} showed that the conjecture holds when $n\geq 33k^2$.  
The aim of this \body     is to improve these bounds by showing that the conjecture holds in a range when $n$ is linear with respect to $k$.

\begin{theorem} \label{LinearMMS}
Suppose that $n\geq 10^{46}k$, and we have $n$ real numbers $x_1, \dots, x_n$ such that $x_1 + \dots + x_n \geq 0$. At least $\binom{n-1}  {k-1}$ subsets $A\subset \{x_1, \dots, x_n\}$ of order $k$ satisfy $\sum_{a\in A} a\geq 0$
\end{theorem}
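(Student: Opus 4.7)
The plan is to work via the equivalent reformulation: showing that a uniformly random $k$-subset $A$ of $\{x_1,\dots,x_n\}$ has nonnegative sum with probability at least $k/n$, which is equivalent to the desired count since $\binom{n-1}{k-1}=(k/n)\binom{n}{k}$. Before beginning the main argument, I would normalize by decreasing the largest element $x_1$ until $x_1+\dots+x_n=0$ (this can only decrease the number of nonnegative $k$-subsets, by continuity), and then sort so that $x_1\geq x_2\geq\dots\geq x_n$.

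The first reduction handles "spiked" configurations: if $x_1 \geq (k-1)|x_n|$, then every $k$-subset containing $x_1$ has sum at least $x_1+(k-1)x_n\geq 0$, immediately yielding $\binom{n-1}{k-1}$ nonnegative subsets. So I may assume $x_1 < (k-1)|x_n|$, which bounds the ratio of extreme values and, since $n\geq 10^{46}k$, gives a lot of slack for counting.

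The main argument I would pursue splits on the count $p$ of nonnegative $x_i$. When $p$ is small, the positive mass is concentrated in few coordinates which, by the zero-sum constraint, must each be large; an injection sending a $(k-1)$-subset $T$ of the remaining coordinates to $T\cup\{i(T)\}$ for a well-chosen positive index $i(T)$, combined with a swap argument that upgrades a nonnegative subset by replacing its smallest element with a larger unused one, should produce enough nonnegative subsets. When $p$ is comparable to $n$, the expected number of positive elements in a random $k$-subset is $\Theta(k)$, and an exchange/concentration argument should show that the tail probability of nonnegativity is at least $k/n$.

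The main obstacle is the intermediate regime, where $p=\Theta(k)$ or where the elements span many magnitude scales without a dominant one. Here any naive approach that partitions the $x_i$ by magnitude and applies a separate counting argument per scale will lose a factor of the number of scales---which is exactly the kind of loss producing the $k^2$ in the Alon--Huang--Sudakov bound $n\geq 33k^2$. To bring this down to linear in $k$, I expect one must construct a single injection from $\binom{[n-1]}{k-1}$ into the nonnegative $k$-subsets that simultaneously exploits all magnitude levels, perhaps via a shifting/compression scheme or a more global exchange procedure on the index set. Designing this injection---and verifying that it remains injective when defined across scales---is where I would expect the key technical innovation of the proof to lie.
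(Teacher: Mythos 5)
Your proposal is not a proof but a programme, and the programme stalls exactly where the difficulty of the theorem lives. The two reductions you do carry out are sound but standard: the normalization to $x_1+\dots+x_n=0$ and the ``spiked'' case $x_1\geq (k-1)|x_n|$ (which instantly gives $\binom{n-1}{k-1}$ nonnegative $k$-sets through $x_1$) are the opening moves of Alon--Huang--Sudakov, and the split on the number $p$ of nonnegative entries is likewise how the known $O(k^2)$ bound is organized. For the regimes that remain you offer only intentions: in the small-$p$ case the claimed injection ``$T\mapsto T\cup\{i(T)\}$ plus a swap argument'' is never specified (in particular nothing is said about how to choose $i(T)$ so that the map is injective and the resulting sums are nonnegative), in the $p=\Theta(n)$ case ``an exchange/concentration argument should show'' the tail bound without any estimate, and in the intermediate regime --- which you correctly identify as the bottleneck and the source of the quadratic loss in earlier work --- you explicitly state that the key construction still has to be designed. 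So the statement is not established by what you wrote; the missing single cross-scale injection \emph{is} the theorem, not a detail to be filled in.

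It is also worth noting that the actual proof does not follow your outline at all, and the contrast explains why your plan is hard to complete. The paper never sorts the $x_i$ or partitions them by magnitude. Instead it proves an averaging lemma in the style of Katona's proof of the Erd\H{o}s--Ko--Rado theorem: if some $d$-regular $k$-uniform $n$-vertex hypergraph has the property that every weighting with nonnegative total sum yields at least $d$ nonnegative edges (the MMS-property), then the complete hypergraph has at least $\binom{n-1}{k-1}$ nonnegative $k$-sets. The work then shifts to constructing a sparse $k(k-1)^2$-regular hypergraph $\mathcal{H}_{n,k}$ of ``double intervals'' on $\mathbb{Z}_n$ and proving it has the MMS-property; that proof is structural, built on showing that long intervals containing few ``bad'' vertices must have negative sum, and it ultimately relies on additive combinatorics (Kneser's theorem and a theorem of Grynkiewicz on popular sums) rather than on any injection into $\binom{[n-1]}{k-1}$. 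In other words, the innovation you were searching for is not a cleverer global exchange scheme on the complete hypergraph but a change of the underlying object; without that (or a genuinely new injection of the kind you only conjecture), your argument does not reach a linear bound.
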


It is worth noticing at this point that there seem to be connections between the problem and results mentioned so far in this \body, and the Erd\H{o}s-Ko-Rado Theorem about intersecting families of sets.  A family $\mathcal{A}$ of sets is said to be \emph{intersecting} if any two members of $\mathcal{A}$ intersect.
The Erd\H{o}s-Ko-Rado Theorem \cite{ErdosKoRado} says that for $n\geq 2k$, any intersecting family $\mathcal{A}$ of subsets of $[n]$ of order $k$, must satisfy $|\mathcal{A}|\leq {\binom{n-1}{k-1}}$.  The extremal family of sets in the Erd\H{o}s-Ko-Rado Theorem is formed by considering the family of all $k$-sets which contain a particular element of $[n]$.  This is exactly the family $\mathcal{A}$ that we obtain from the extremal case of the Manickam-Mikl\'os-Singhi Conjecture if we let the members of $\mathcal{A}$ be the nonnegative $k$-sums from $x_1, \dots, x_n$. In addition, many of the methods used to approach Conjecture~\ref{MMSConjecture} are similar to proofs of the Erd\H{o}s-Ko-Rado Theorem.  
The method we use to prove Theorem~\ref{LinearMMS} in this \body is inspired by Katona's proof of the Erd\H{o}s-Ko-Rado Theorem in \cite{Katona}.

Suppose that we have a hypergraph ${\Hyper{H}}$ together with an assignment of real numbers to the vertices of ${\Hyper{H}}$ given by $f:V({\Hyper{H}})\to \mathbb{R}$. We can extend $f$ to the powerset of $V({\Hyper{H}})$ by letting $f(A)=\sum_{v\in A} f(v)$ for every $A\subseteq V({\Hyper{H}})$.  We say that an edge $e \in E({\Hyper{H}})$ is \emph{negative} if $f(e)<0$, and $e$ is \emph{nonnegative} otherwise. We let $e_f ^+({\Hyper{H}})$ be the number of nonnegative edges of ${\Hyper{H}}$.  
Recall that the degree $d(v)$ of a vertex $v$ in a hypergraph $\Hyper{H}$ is the number of edges containing $v$.  A hypergraph $\Hyper{H}$ is $d$-regular if every vertex has degree $d$.   The minimum degree of a hypergraph $\Hyper{H}$ is $\delta(\Hyper{H})=\min_{v\in V(\Hyper{H})} d(v)$.  The $k$-uniform complete hypergraph on $n$ vertices is denoted by $\CompHyper{n}{k}$.

The following observation is key to our proof of Theorem~\ref{LinearMMS}.
\begin{lemma} \label{MMSAveraging}
Let ${\Hyper{H}}$ be a $d$-regular $k$-uniform hypergraph on $n$ vertices. Suppose that for every $f:V(\Hyper{H})\to \mathbb{R}$ satisfying $\sum_{x \in V(\Hyper{H})} f(x)\geq 0$ we have $e_f ^+({\Hyper{H}})\geq d$. Then for every $f:V(\CompHyper{n}{k})\to \mathbb{R}$ satisfying $\sum_{x \in V(\CompHyper{n}{k})} f(x)\geq 0$ we have $e_f ^+(\CompHyper{n}{k})\geq \binom{n-1}{k-1}$ (and so Conjecture~\ref{MMSConjecture} holds for this particular $n$ and $k$).
\end{lemma}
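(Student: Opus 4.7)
The plan is to use a double counting / averaging argument over the symmetric group $S_n$, embedding many copies of $\mathcal{H}$ into $\CompHyper{n}{k}$ via permutations and then comparing two ways of counting nonnegative edges across these copies.

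First, fix $f:V(\CompHyper{n}{k})\to\mathbb{R}$ with $\sum_x f(x)\geq 0$. For each $\sigma\in S_n$, define $f_\sigma:V(\Hyper{H})\to\mathbb{R}$ by $f_\sigma(v)=f(\sigma(v))$. Since $\sigma$ only permutes the arguments, $\sum_{v\in V(\Hyper{H})} f_\sigma(v)=\sum_x f(x)\geq 0$, so the hypothesis applies: $e_{f_\sigma}^+(\Hyper{H})\geq d$ for every $\sigma$. Summing over all $n!$ permutations, the number of triples $(\sigma,e)$ with $e\in E(\Hyper{H})$ and $f(\sigma(e))\geq 0$ is at least $n!\,d$.

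Next I would count the same quantity by fixing $e$ first. For each fixed $e\in E(\Hyper{H})$, the image $\sigma(e)$ ranges uniformly over the $k$-subsets of $[n]$ as $\sigma$ ranges over $S_n$, with each $k$-subset attained exactly $k!(n-k)!$ times. Hence for each $e$ the number of $\sigma$ with $f(\sigma(e))\geq 0$ is exactly $k!(n-k)!\cdot e_f^+(\CompHyper{n}{k})$. Since $\Hyper{H}$ is $d$-regular and $k$-uniform, $|E(\Hyper{H})|=nd/k$, so the total is
\[
\frac{nd}{k}\cdot k!(n-k)!\cdot e_f^+(\CompHyper{n}{k}).
\]

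Finally, combining the two counts yields
\[
\frac{nd}{k}\cdot k!(n-k)!\cdot e_f^+(\CompHyper{n}{k})\;\geq\;n!\,d,
\]
and dividing gives $e_f^+(\CompHyper{n}{k})\geq \binom{n-1}{k-1}$, as required. There is no real obstacle here beyond making sure the orbit count $k!(n-k)!$ is handled correctly; the lemma is essentially a clean averaging identity that reduces the Manickam--Mikl\'os--Singhi problem on $\CompHyper{n}{k}$ to the analogous statement on any single $d$-regular $k$-uniform witness hypergraph, and the bulk of the paper's effort will presumably go into constructing such an $\Hyper{H}$ for $n$ linear in $k$.
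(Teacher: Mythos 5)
Your proof is correct and is essentially the paper's own argument: the paper phrases the same double count probabilistically, taking a uniformly random permutation $\sigma$ and computing $\mathbb{E}\bigl(e^+_{f_\sigma}(\Hyper{H})\bigr)$ in two ways, which is exactly your count of pairs $(\sigma,e)$ divided by $n!$. The orbit count $k!(n-k)!$ and the edge count $nd/k$ match the paper's ratio $d/\binom{n-1}{k-1}$, so there is no substantive difference.
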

Lemma~\ref{MMSAveraging} is proved by an averaging technique similar to Katona's proof of the Erd\H{o}s-Ko-Rado Theorem (see Section~\ref{MMSAveragingSection}).  This technique has already appeared in the context of the Manickam-Mikl\'os-Singhi Conjecture in \cite{Manickam} where it was used to prove the conjecture when $n\geq (k - 1)(k^k + k^2 ) + k$.

Lemma~\ref{MMSAveraging} shows that instead of proving the conjecture about the complete graph $\CompHyper{n}{k}$, it may be possible to find regular hypergraphs which satisfy the condition in Lemma~\ref{MMSAveraging} and hence deduce the conjecture. 
This motivates us to make the following definition.
\begin{definition}
A $k$-uniform hypergraph $\Hyper{H}$ has the \textbf{MMS-property} if for every $f:V(\Hyper{H})\to \mathbb{R}$ satisfying $\sum_{x \in V(\Hyper{H})} f(x)\geq 0$ we have $e^+({\Hyper{H}})\geq \delta(\Hyper{H})$.  
\end{definition}
Conjecture~\ref{MMSConjecture} is equivalent to the statement that for $n\geq 4k$ the complete hypergraph on $n$ vertices has the MMS-property.
Lemma~\ref{MMSAveraging} shows that in order to prove Conjecture~\ref{MMSConjecture} for particular $n$ and $k$, it is sufficient to find one regular $n$-vertex $k$-uniform hypergraph $\Hyper{H}$ with the MMS-property.  This hypergraph $\Hyper{H}$ may be much sparser than the complete hypergraph---allowing for very different proof techniques.

Perhaps the first two candidates one chooses for hypergraphs that may have the MMS-property are matchings and tight cycles.
The matching $\Hyper{M}_{t,k}$ is defined as the $k$-uniform hypergraph consisting of $tk$ vertices and $t$ vertex disjoint edges.  Notice that $\Hyper{M}_{t,k}$ is 1-regular.  The matching $\Hyper{M}_{t,k}$ always has the MMS-property---indeed we have that $\sum_{e \in E(\Hyper{M}_{t,k})}f(e)= \sum_{x \in \Hyper{M}_{t,k}} f(x)\geq 0$, and so one of the edges of $\Hyper{M}_{t,k}$ is nonnegative.  This observation was used in \cite{Singhi} to prove Conjecture~\ref{MMSConjecture} whenever $k$ divides $n$.

The tight cycle $\Hyper{C}_{n,k}$ is defined as the hypergraph with vertex set $\mathbb{Z}_n$ and edges formed by the intervals $\{i\pmod{n}, i+1\pmod{n}, \dots, i+k \pmod{n}\}$ for $i\in\mathbb{Z}_n$.  It turns out that the tight cycles do not have the MMS-property when $n\not\equiv 0\pmod{k}$.  To see this for example when $k=3$ and $n\equiv 1 \pmod k$, let $f(x) = 50,$ $50,$ $50,$ $-101,$ $50,$ $50,$ $-101,$ $50,$ $50,$ $-101 \dots $ for $x = 1,2,3,4,5,6,7,8,9,10, \dots$.

An interesting question, which we will return to in Section~\ref{MMSRemarksSection} is ``which hypergraphs have the MMS-property?''

The main result of this \body is showing that there exist $k(k-1)^2$-regular $k$-uniform hypergraphs on $n$ vertices which have the MMS-property, for all $n\geq 10^{46}k$.
\begin{theorem}\label{MMSMainTheorem}
For $n\geq 10^{46}k$, there are $k(k-1)^2$-regular $k$-uniform hypergraphs on $n$ vertices, $\Hyper{H}_{n,k}$, with the property that for every $f:V(\Hyper{H}_{n,k})\to \mathbb{R}$ satisfying $\sum_{x \in V(\Hyper{H}_{n,k})} f(x)\geq 0$ we have $e^+(\Hyper{H}_{n,k})\geq k(k-1)^2$.
\end{theorem}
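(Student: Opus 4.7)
The plan is to realize $\Hyper{H}_{n,k}$ as a union of $k(k-1)^2$ near-perfect matchings of $\CompHyper{n}{k}$, and then to exploit Singhi's observation: if $M$ is a perfect matching then the edge sums satisfy $\sum_{e\in M}f(e)=\sum_{v}f(v)\ge 0$, so at least one edge of $M$ is nonnegative. When $k\mid n$, taking $k(k-1)^2$ edge-disjoint perfect matchings of $\CompHyper{n}{k}$ would immediately produce $k(k-1)^2$ distinct nonnegative edges of degree $k(k-1)^2$. The real work lies in handling the case $k\nmid n$, where matchings necessarily leave some vertices uncovered and the "one nonnegative edge per matching" argument must be patched with error terms.

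\textbf{Construction.}
I would fix $k(k-1)^2$ near-perfect matchings $M_1,\dots,M_{k(k-1)^2}$, each missing only a set $U_i\subset V$ with $|U_i|=n-k\lfloor n/k\rfloor<k$, and choose them so that (a) the union $\Hyper{H}_{n,k}=M_1\cup\cdots\cup M_{k(k-1)^2}$ is $k(k-1)^2$-regular, and (b) the family $\{U_i\}$ is "spread out" in the sense that no small set of vertices meets too many $U_i$'s. A natural construction is to take many cyclic shifts of a single carefully chosen base near-matching on a random permutation of $V$, together with a symmetrisation step; for $n\ge 10^{46}k$ the vertex set is large enough that so many shifts coexist without destroying regularity or spreadness.

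\textbf{MMS argument.}
For every $i$ we may write
\[
\sum_{e\in M_i}f(e)\;=\;\sum_{v\in V}f(v)\;-\;\sum_{v\in U_i}f(v)\;\ge\;-\sum_{v\in U_i}f(v).
\]
If $f$ has no single vertex of extreme value, then $\sum_{v\in U_i}f(v)$ is small (or nonpositive) for every $i$, so each $M_i$ contains a nonnegative edge and, since the $M_i$ are edge-disjoint inside $\Hyper{H}_{n,k}$, this yields the required $k(k-1)^2$ distinct nonnegative edges. If instead one vertex $v^*$ carries $f(v^*)$ much larger than all others, then every edge through $v^*$ is automatically nonnegative, and the regularity of $\Hyper{H}_{n,k}$ supplies exactly $k(k-1)^2$ such edges.

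\textbf{Main obstacle.}
The hard case is the intermediate regime in which a small set $T$ of vertices carries moderately large positive $f$-values but no single vertex is dominant. Then the averaging argument may fail for those matchings $M_i$ with $U_i\cap T\neq\emptyset$, while the "edges through a dominant vertex" argument does not apply either. To bridge these two regimes I would parametrise the case split by $|T|$, use the spreadness property (b) to bound the number of "bad" matchings in terms of $|T|$, and show that the surviving matchings together with the automatic nonnegative edges passing through $T$ still account for at least $k(k-1)^2$ nonnegative edges. Making this trade-off quantitatively tight is the main technical step, and this balancing is presumably where the enormous constant $10^{46}$ enters.
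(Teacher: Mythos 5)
There is a genuine gap, and it sits exactly where you locate it: the ``intermediate regime'' is not an edge case to be patched but the entire difficulty of the problem, and your sketch offers no actual argument there. When $k\mid n$ the perfect-matching observation already proves the conjecture (Manickam--Singhi), so everything hinges on $k\nmid n$. Your inequality $\sum_{e\in M_i}f(e)\ge -\sum_{v\in U_i}f(v)$ only yields a nonnegative edge of $M_i$ when $\sum_{v\in U_i}f(v)\le \sum_v f(v)$, and since the total sum may be exactly $0$ this means you need $\sum_{U_i}f\le 0$; ``small'' is not enough. With a set $T$ of a few vertices carrying values of order $n/|T|$ and all other values slightly negative, every matching whose uncovered set meets $T$ can fail, and the claim that edges through such vertices are ``automatically nonnegative'' is false: an edge through one vertex of $T$ also contains $k-1$ other vertices whose total can be arbitrarily negative relative to $f$ restricted to that edge unless $|T|$ is very small. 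So in the regime you call the main obstacle, neither of your two mechanisms produces edges, and ``spreadness'' of the $U_i$ only bounds how many matchings are spoiled --- it does not manufacture the compensating nonnegative edges you would need. Nothing in a generic union of near-perfect matchings forces the negative edges to have exploitable structure, which is why no counting of the kind you describe is known to close this case; making that trade-off work is not a technical finishing step but the open heart of the proof you have not supplied.

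The paper avoids matchings altogether. It takes $\Hyper{H}_{n,k}$ to be the $k(k-1)^2$-regular hypergraph of ``double intervals'' $e(v,i,j)$ on $\mathbb{Z}_n$, chosen so that prescribed long intervals decompose into few edges of an associated family $E(v,i,j)$ (Lemma~\ref{nonnegativeintervals}) while no edge lies in many such families (Lemma~\ref{spreadout}). Assuming fewer than $k(k-1)^2$ nonnegative edges, it proves two claims: any interval extends slightly to a negative interval with good endpoints (Claim~\ref{cover}), and any moderately long interval with few bad vertices is itself negative (Claim~\ref{transverse}), the latter using sumset growth in $\mathbb{Z}_k$ via the theorems of Grynkiewicz and Kneser to find a common splitting point where both the prefix and the suffix are negative. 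Together these force $f(V)<0$, a contradiction. The structural, interval-based construction is what converts ``few nonnegative edges'' into global negativity; your construction has no analogous leverage.
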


Combining Theorem~\ref{MMSMainTheorem} and Lemma~\ref{MMSAveraging} immediately implies Theorem~\ref{LinearMMS}.

Throughout this \body, we will use notation from Additive Combinatorics for sumsets $A + B=\{a+b: a\in A, b\in B\}$ and translates $A + x=\{a+x:a\in A\}$. 
For all standard notation we refer to~\cite{CombinatoricsBook}.

The structure of this \body is as follows.  In Section~\ref{MMSAveragingSection} we prove Lemma~\ref{MMSAveraging}.  In Section~\ref{MMSConstructionSection}, we define the graphs $\Hyper{H}_{n,k}$ used in Theorem~\ref{MMSMainTheorem} and prove some of their basic properties.  In Section~\ref{MMSSimpleProofSection}, we prove Theorem~\ref{MMSMainTheorem} with the weaker bound of $n\geq 14k^4$ in order to illustrate the main ideas in the proof of Theorem~\ref{MMSMainTheorem}.  In Section~\ref{MMSProofSection} we prove Theorem~\ref{MMSMainTheorem}.  In Section~\ref{MMSRemarksSection}, we conclude by discussing the techniques used in this \body and whether they could be used to prove Conjecture~\ref{MMSConjecture} in general.

\section{Proof of the averaging lemma}\label{MMSAveragingSection}
Here we prove Lemma~\ref{MMSAveraging}.
\begin{proof}
Suppose that we have a function $f:\{1, \dots, n\}\to \mathbb{R}$ satisfying $\sum_{x \in \{1, \dots, n\}} f(x)\geq 0$.
Consider a random permutation $\sigma$ of $\{1, \dots, n\}$, chosen uniformly out of all permutations of $\{1, \dots, n\}$. We define a function $f_{\sigma}:\{1, \dots, n\} \to \mathbb{R}$ given by $f_{\sigma}:x\to f(\sigma(x))$. Clearly $\sum_{x \in \{1, \dots, n\}} f_{\sigma}(x)\geq 0$.   We will count $\mathbb{E}(e^+_{f_{\sigma}}({\Hyper{H}}))$ in two different ways.
For an edge $e\in \CompHyper{n}{k}$, we have
$$\mathbb{P}(\sigma(e)\in \Hyper{H}) =\frac{e(\Hyper{H})}{\binom{n}{k}} = \frac{d}{\binom{n-1}{k-1}}$$
Therefore we have
$$ \mathbb{E}(e^+_{f_{\sigma}}({\Hyper{H}})) = \sum_{\substack{e\in \CompHyper{n}{k}, \\f(e)\geq 0}} \mathbb{P}(\sigma(e)\in \Hyper{H}) = e^+(\CompHyper{n,k}) \frac{d}{\binom{n-1}{k-1}}$$
However, by the assumption of the lemma, $\mathbb{E}(e^+_{f_{\sigma}}({\Hyper{H}}))$ is at least $d$. This gives us
$$e^+(\CompHyper{n}{k})\geq \binom{n-1}{k-1}. $$
\end{proof}

\section{Construction of the hypergraphs $\Hyper{H}_{n,k}$}
In this section we construct graphs $\Hyper{H}_{n,k}$ which satisfy Theorem~\ref{MMSMainTheorem}.  We also prove some basic properties which the graphs $\Hyper{H}_{n,k}$ have.

Define the clockwise interval between $a$ and $b \in \mathbb{Z}_n$ to be  $[a,b]=\{a, a+1, \dots ,b\}$.
The graph $\Hyper{H}_{n,k}$ has vertex set $\mathbb{Z}_n$.  We define $k$-edges $e(v,i,j)$ as follows:

$$ e(v,i,j)= [v,v+i-1]\cup [v+i+j,  v + j + k -1]$$

The edges of $\Hyper{H}_{n,k}$ are given by $e(v,i,j)$ for $v\in \mathbb{Z}_n$ and $i,j \in \{1,\dots,k-1\}$.  In other words $\Hyper{H}_{n,k}$ consists of all the double intervals of order $k$, where the distance between the two intervals is at most $k-1$.

Notice that the graph $\Hyper{H}_{n,k}$ is indeed $k(k-1)^2$ regular. 

In order to deal with the graphs $\Hyper{H}_{n,k}$ it will be convenient to assign a particular set $E(v)$ of $O(k^2)$ edges to each vertex $v$.
First, for each vertex $v$ in $\Hyper{H}_{n,k}$ and $i,j \in [1,k-1]$,  we will define a set of edges, $E(v,i,j)$.  Then $E(v)$ will be a union of the sets $E(v,i,j)$.

The definition of the sets $E(v,i,j)$ is quite tedious.  However the sets $E(v,i,j)$ are constructed to satisfy only a few properties.  One property that we will need is that for fixed, $v$, $i$, $j$  certain intervals can be formed as disjoint unions of edges in $E(v,i,j)$.  See Figures~\ref{fig:evig1} -- \ref{fig:evig4} for illustrations of the precise configurations that we will use.  Another property that we will need is that no edge $e \in \Hyper{H}_{n,k}$ is contained in too many of the sets $E(v,i,j)$.  See Lemmas~\ref{nonnegativeintervals} and~\ref{spreadout} for precise statements of these two properties.

Over the next four pages we define the sets $E(v,i,j)$.  

\newpage

If $i+j \geq k$ and $i\geq j$, then we let
\begin{align*}
E(v,i,j)=\{&e(v,i,j), e(v+k+j,i,i+j-k), \\
&e(v+k+i+j, i+j-k, 2k-2i),  e(v+i, j, k-i),\\
&e(v+k+i+2j, k-i,2k-i-j), e(v+i, j, 2k-i-j), \\
&e(v+3k-j, i, j), e(v+3k - j+i, j, k-i),   \\    
&e(v+i, i+j-k, 2k-2i), e(v+i+j, k-i, 2k-i-j), \\
&e(v+2k,i,j), e(v+2k+i,j,k-i)\}.
\end{align*}                 
\begin{figure}
  \centering
    \includegraphics[width=0.87\textwidth]{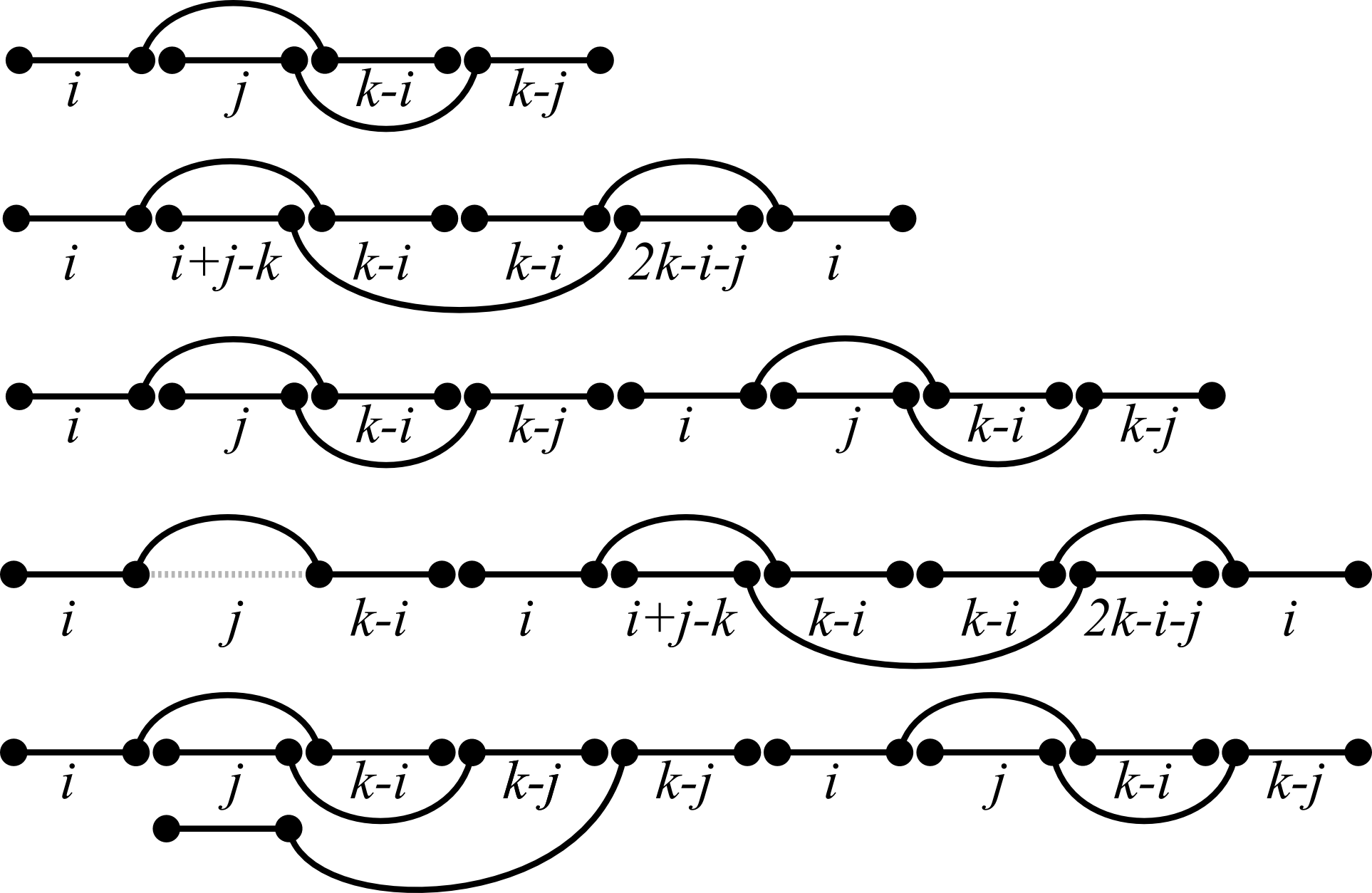}
  \caption{The edges in $E(v,i,j)$  when we have $i+j \geq k$ and $i\geq j$.} \label{fig:evig1}
\end{figure}               
  \newpage
If $i+j \geq k$ and $j< i$, then we let
\begin{align*}
E(v,i,j)=\{&e(v,i,j), e(v+k+j,j,i+j-k), \\ 
&e(v+k+2j, i+j-k, 2k-2j), e(v+i, j, k-i), \\
&e(v+k+i+2j, k-j,2k-i-j),\\
                 &e(v+i, j, 2k-i-j), e(v+3k-j, i, j), \\
&e(v+3k - j+i, j, k-i),       e(v, j, i+j-k),\\ 
&e(v+j, i+j-k, 2k-2j), e(v+i+j, k-j, 2k-i-j),\\ 
&e(v+2k,i,j), e(v+2k+i,j,k-i)
                \}.
\end{align*}   
\begin{figure}
  \centering
    \includegraphics[width=0.87\textwidth]{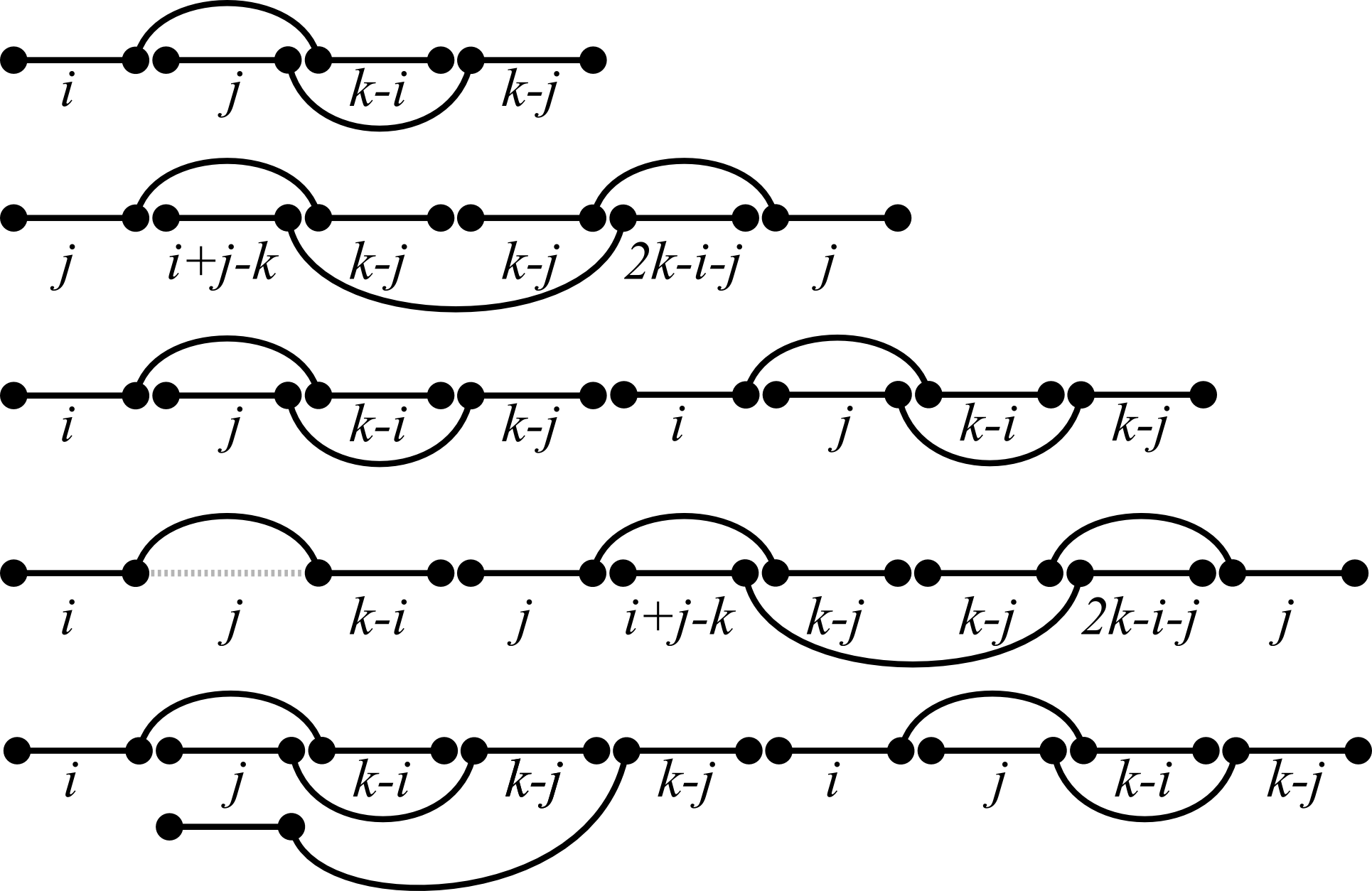}
  \caption{The edges in $E(v,i,j)$  when we have $i+j \geq k$ and $j< i$.} \label{fig:evig2}
\end{figure}               
  
\newpage

If  $i+j < k$ and $i$ is even, then we let
\begin{align*}
E(v,i,j)=\{&e(v,i,j), e(v+k+j, k-\frac{i}{2},i+j),\\ 
&e(v+2k+j-\frac{i}{2}, i+j, i),  e(v,i+j,\frac{i}{2}), \\
&e(v+2k+i+2j, \frac{i}{2},k-\frac{i}{2}),
                e(v+i, j+\frac{i}{2}, k-i-j), \\
&e(v+2k-j, k-\frac{i}{2},i+j), e(v+3k-j-\frac{i}{2}, i+j, i),\\ 
&e(v+3k+i, \frac{i}{2},k-\frac{i}{2}),          e(v, k-\frac{i}{2},i+j),\\
&e(v + k-\frac{i}{2}, i+j, i), e(v+k+i+j, \frac{i}{2},k-i-j),\\
                 &e(v+i, j, k-i), e(v+2k,i,j), e(v+2k+i,j,k-i)\}.
\end{align*} 
  \begin{figure}
  \centering
    \includegraphics[width=0.87\textwidth]{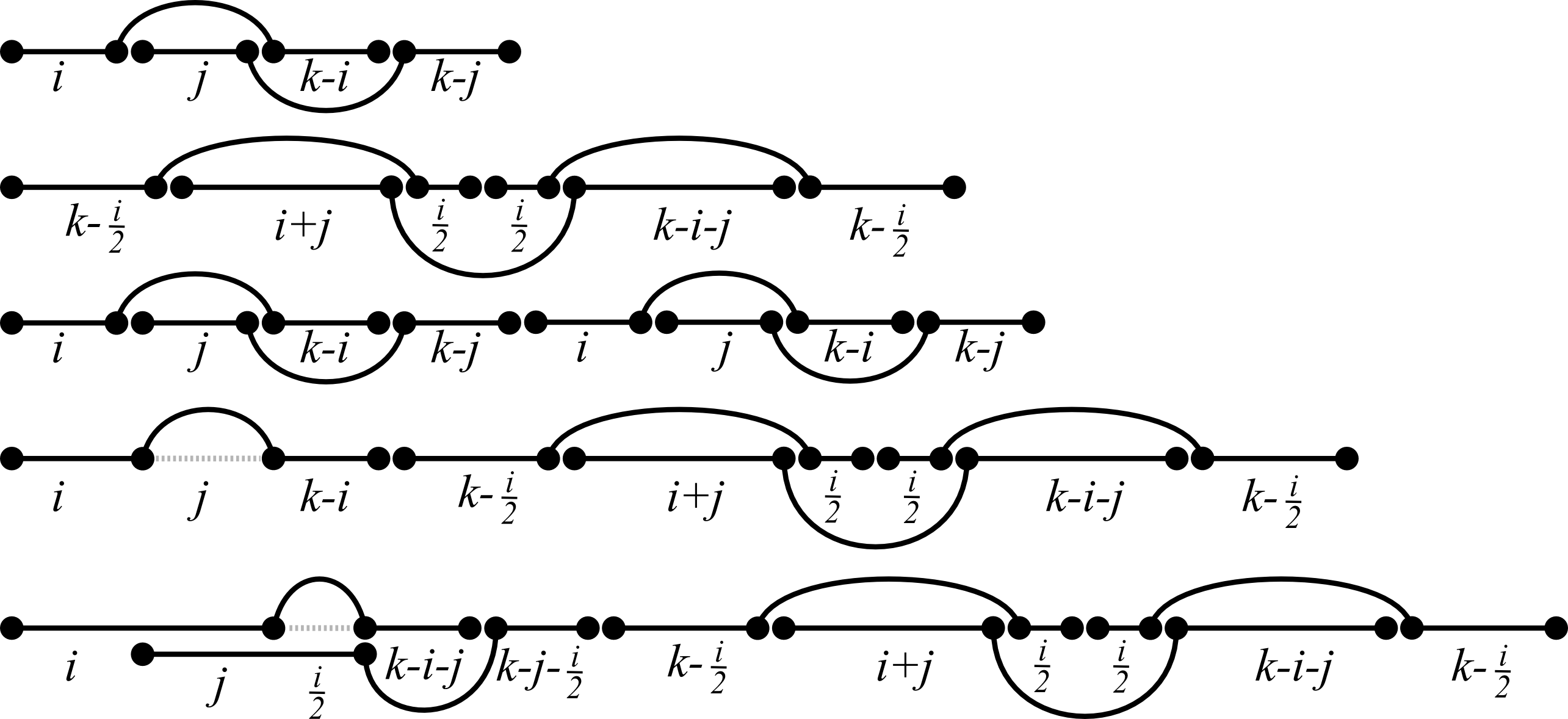}
  \caption{The edges in $E(v,i,j)$  when we have $i+j < k$ and $i$ is even.} \label{fig:evig3}
\end{figure}

If $i+j < k$ and $i$ is odd, then we let
\begin{align*}
E(v,i,j)=\{&e(v,i,j), e(v+k+j, k-\frac{i-1}{2},i+j), \\
&e(v+2k+j-\frac{i-1}{2}, i+j, i), e(v,i+j,\frac{i-1}{2}),\\
&e(v+2k+i+2j, \frac{i-1}{2},k-\frac{i-1}{2}),                 e(v+i, j+\frac{i-1}{2}, k-i-j),\\ 
&e(v+2k-j, k-\frac{i-1}{2},i+j), e(v+3k-j-\frac{i-1}{2}, i+j, i), \\
&e(v+3k+i, \frac{i-1}{2},k-\frac{i-1}{2}),                e(v, k-\frac{i-1}{2},i+j), \\
&e(v + k-\frac{i-1}{2}, i+j, i), e(v+k+i+j, \frac{i-1}{2},k-i-j),\\
                 &e(v+i, j, k-i), e(v+2k,i,j), e(v+2k+i,j,k-i)\}.
\end{align*} 

  \begin{figure}
  \centering
    \includegraphics[width=0.87\textwidth]{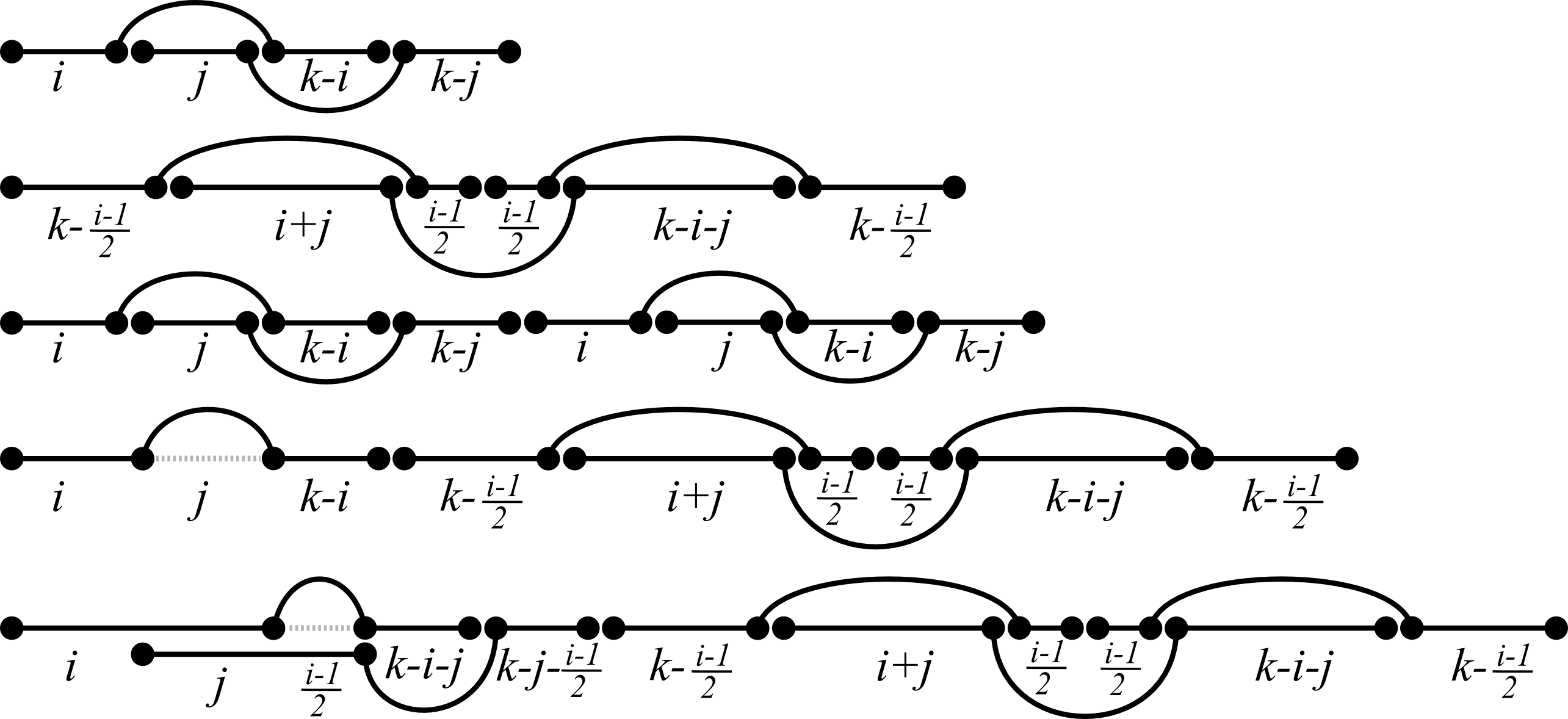}
  \caption{The edges in $E(v,i,j)$  when we have $i+j < k$ and $i$ is odd.} \label{fig:evig4}
\end{figure}

We define $E^-(v,i,j)$ to be the set of edges corresponding to edges in $E(v,i,j)$, but going anticlockwise (i.e. $E^-(v,i,j)=\{\{x_1, \dots, x_k\}:  \{v-(x_1-v), \dots, v-(x_k-v)\} \in E(v,i,j)).$
For each vertex $v$, we let $$E(v)=\bigcup_{i,j \in [1, k-1]}E(v,i,j)\cup E^-(v,i,j).$$

Notice that from the definition of $E(v,i,j)$, we certainly have $E(v,i,j)\leq 15$ for every $i, j\in [1,k-1]$, which implies that $|E(v)| \leq 15(k-1)^2$.  Also, since $e(v,i,j)\in E(v)$ for every $i,  j\in [1,k-1]$, we have that $E(v)\geq (k-1)^2$.  Therefore, we have $|E(v)|=\Theta(k^2)$.

There are only two features of the sets $E(v,i,j)$ that will be needed in the proof of Theorem~\ref{MMSMainTheorem}.  One is that sequences of edges similar to the ones in Figures~\ref{fig:evig1} -- \ref{fig:evig4} exist in $E(v,i,j)$.   This allows us to prove the following lemma.

\begin{lemma}\label{nonnegativeintervals}
Suppose that $i,j \in [1,k-1]$ and all the edges in $E(v,i,j)$ are negative.  The following hold.
\begin{enumerate}[\normalfont(i)]
\item $f([v, v+2k-1])<0$.
\item $f([v, v+3k-1])<0$.
\item $f([v, v+4k-1])<0$.
\item $f([v+i, v+i+j-1])<0 \implies f([v, v+4k+j-1])<0$.
\item $f([v+i, v+i+j-1])\geq 0 \implies f([v, v+5k-j-1])<0$.
\end{enumerate}
\end{lemma}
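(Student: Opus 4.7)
The plan is to prove each of (i)--(v) by exhibiting a family of edges from $E(v,i,j)$ whose indicator functions combine, with nonnegative integer coefficients (possibly with multiplicity), into the indicator of the relevant interval, up to at most one controllable error term. Since each edge of $E(v,i,j)$ has negative $f$-value by hypothesis, summing these indicators gives a negative total, from which the stated inequality for $f$ on the interval follows.

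Parts (i) and (iii) are settled by direct calculation from the definition of $e(\cdot,\cdot,\cdot)$: $e(v,i,j)$ and $e(v+i,j,k-i)$ are disjoint with union $[v,v+2k-1]$, which gives (i), and the shifted pair $e(v+2k,i,j)$, $e(v+2k+i,j,k-i)$ tiles $[v+2k,v+4k-1]$ by the same argument, so that the four edges together pave $[v,v+4k-1]$, giving (iii). All four of these edges appear in every one of the four variants of the definition of $E(v,i,j)$, so these two parts are case-free.

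For (iv) I will exhibit four disjoint edges of $E(v,i,j)$ whose union is the punctured interval $[v,v+i-1]\cup[v+i+j,v+4k+j-1]$; in case 1 one can take $e(v,i,j)$ together with $e(v+k+j,i,i+j-k)$, $e(v+k+i+j,i+j-k,2k-2i)$, and $e(v+k+i+2j,k-i,2k-i-j)$. Combining the resulting negative sum with the hypothesis $f([v+i,v+i+j-1])<0$ yields (iv). For (v) I will find a five-edge multiset whose indicator equals $\mathbf{1}_{[v,v+5k-j-1]}+\mathbf{1}_{[v+i,v+i+j-1]}$; in case 1 the collection $e(v,i,j)$, $e(v+i,j,k-i)$, $e(v+i,j,2k-i-j)$, $e(v+3k-j,i,j)$, $e(v+3k-j+i,j,k-i)$ realises this identity, and summing gives $f([v,v+5k-j-1])+f([v+i,v+i+j-1])<0$; the hypothesis $f([v+i,v+i+j-1])\geq 0$ then gives the conclusion. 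Part (ii) will follow from the same sort of multiset argument, using an appropriate combination of four or five edges of $E(v,i,j)$ whose indicator sum equals $\mathbf{1}_{[v,v+3k-1]}+\mathbf{1}_T$ for an auxiliary set $T$ that is itself a disjoint union of edges of $E(v,i,j)$; then $f(T)<0$ can be subtracted from the left side to isolate $f([v,v+3k-1])<0$.

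The principal obstacle is the case analysis. The set $E(v,i,j)$ is defined by a four-way split (on whether $i+j\geq k$, and then on $i$ versus $j$ in the first case, or on the parity of $i$ in the second), and the explicit tiling or multiset identity for each of (i)--(v) must be checked separately in each of the four cases, using $1\leq i,j\leq k-1$ to rule out unwanted overlaps between the intervals that make up the edges. The definitions of $E(v,i,j)$ have been engineered precisely so that all five required identities exist simultaneously in every variant, as the figures~\ref{fig:evig1}--\ref{fig:evig4} display pictorially; the proof therefore reduces to a direct, if tedious, verification once the correct collection of edges has been identified in each case.
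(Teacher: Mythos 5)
Your treatment of parts (i), (iii), (iv) and (v) is exactly the paper's argument, down to the same edge collections: the two- and four-edge tilings for (i) and (iii), the four edges covering the punctured interval $[v,v+i-1]\cup[v+i+j,v+4k+j-1]$ for (iv), and the five-edge multiset whose indicator is $\mathbf{1}_{[v,v+5k-j-1]}+\mathbf{1}_{[v+i,v+i+j-1]}$ for (v), with the remaining three cases handled "similarly" just as in the paper. That part is fine.

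The sketch for part (ii), however, is logically broken as stated. If a family of edges of $E(v,i,j)$ has indicator sum $\mathbf{1}_{[v,v+3k-1]}+\mathbf{1}_T$, then all you get is $f([v,v+3k-1])+f(T)<0$, i.e.\ $f([v,v+3k-1])<-f(T)$; when $T$ is itself a union of negative edges you have $f(T)<0$, so $-f(T)>0$ and no conclusion about the sign of $f([v,v+3k-1])$ follows. You cannot "subtract $f(T)<0$ from the left side": cancelling an extra block $T$ only works when you know $f(T)\geq 0$, which is precisely why part (v) needs the hypothesis $f([v+i,v+i+j-1])\geq 0$, and why (iv) needs the opposite hypothesis to add a negative interval rather than remove one. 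The correct route for (ii) — and the one the sets $E(v,i,j)$ were built for — needs no auxiliary set at all: in each of the four cases $E(v,i,j)$ contains three edges that partition $[v,v+3k-1]$ exactly (for $i+j\geq k$, $i\geq j$ one can take $e(v,i,i+j-k)$, $e(v+i,i+j-k,2k-2i)$, $e(v+i+j,k-i,2k-i-j)$; for $i+j<k$ with $i$ even, $e(v,k-\frac{i}{2},i+j)$, $e(v+k-\frac{i}{2},i+j,i)$, $e(v+k+i+j,\frac{i}{2},k-i-j)$, and analogously in the remaining cases), so $f([v,v+3k-1])$ is a sum of negative edge values directly, just as in (i) and (iii). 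With that replacement your proof matches the paper's.
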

\begin{proof}
Figures~\ref{fig:evig1} -- \ref{fig:evig4} illustrates the constructions that are used in the proof of this lemma.
\begin{enumerate}[\normalfont(i)]
\item This follows from the fact that $e(v,i,j), e(v+i, j, k-i) \in E(v,i,j)$ and $e(v,i,j) \cup e(v+i, j, k-i) = [v, v+2k-1]$.

\item For $i+j\geq k$ and $i\geq j$, this  follows from the fact that $e(v, i, i+j-k), e(v+i, i+j-k, 2k-2i), e(v+i+j, k-i, 2k-i-j) \in E(v,i,j)$ and $e(v, i, i+j-k)\cup e(v+i, i+j-k, 2k-2i)\cup e(v+i+j, k-i, 2k-i-j) = [v, v+3k-1]$.  The other cases are similar.

\item This follows from the fact that $e(v,i,j), e(v+i, j, k-i), e(v+2k,i,j), e(v+2k+i, j, k-i) \in E(v,i,j)$ and $e(v,i,j) \cup e(v+i, j, k-i) \cup  e(v+2k,i,j)\cup e(v+2k+i, j, k-i)= [v, v+4k-1]$.

\item For $i+j\geq k$ and $i\geq j$, this  follows from the fact that $e(v,i,j), e(v+k+j,i,i+j-k), e(v+k+i+j, i+j-k, 2k-2i), e(v+k+i+2j, k-i,2k-i-j) \in E(v,i,j)$ and $e(v,i,j)\cup e(v+k+j,i,i+j-k)\cup e(v+k+i+j, i+j-k, 2k-2i)\cup e(v+k+i+2j, k-i,2k-i-j) \cup [v+i, v+i+j-1]= [v, v+4k+j-1]$.  The other cases are similar.

\item For $i+j\geq k$ and $i\geq j$, this  follows from the fact that $ e(v,i,j), e(v+i, j, k-i), e(v+i, j, 2k-i-j), e(v+3k-j, i, j), e(v+3k - j+i, j, k-i) \in E(v,i,j)$ and also $ e(v,i,j)\cup e(v+i, j, k-i)\cup e(v+i, j, 2k-i-j)\cup e(v+3k-j, i, j)\cup e(v+3k - j+i, j, k-i)  = [v, v+5k-j-1]$ and $e(v+i, j, k-i)\cap e(v+i, j, 2k-i-j)= [v+i, v+i+j-1]$.  The other cases are similar.
\end{enumerate}
\end{proof}

 The other feature of the sets $E(v,i,j)$ that we need is that no edge is contained in too many of the sets $E(v,i,j)$.  This is quantified in the following lemma.
For the duration of this \body, we fix the constant $C_1=110$.
\begin{lemma} \label{spreadout}
Let $e$ be an edge in $\Hyper{H}_{n,k}$.  The edge $e$ is contained in at most $C_1$ of the sets $E(v,i,j)\cup E^-(v,i,j)$ for $v \in V(\Hyper{H}_{n,k})$, and $i,j\in [1,k-1]$.
\end{lemma}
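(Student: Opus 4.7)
My plan is to count the triples $(v,i,j)$ with $e \in E(v,i,j)$ \emph{slot by slot}. The definition of $E(v,i,j)$ splits into the four cases of Figures~\ref{fig:evig1}--\ref{fig:evig4}, which list explicitly $12$, $13$, $15$, and $15$ edges respectively, for a total of $N = 55$ ``slots''. Each slot writes its edge in the form
\[
e\bigl(v + \alpha_s(i,j),\ \beta_s(i,j),\ \gamma_s(i,j)\bigr),
\]
where $\alpha_s,\beta_s,\gamma_s$ are integer affine-linear functions of $(i,j)$ (depending on $k$) which can be read off the formulas. For example, slot number one is always $(\alpha_s,\beta_s,\gamma_s) = (0,i,j)$.

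Now fix $e = e(w,a,b) \in \Hyper{H}_{n,k}$, where $w\in\mathbb{Z}_n$ and $a,b\in[1,k-1]$. For each of the $55$ slots $s$, the assertion ``$e$ arises as slot $s$ of $E(v,i,j)$'' is equivalent to the three equations
\[
a = \beta_s(i,j), \qquad b = \gamma_s(i,j), \qquad v \equiv w - \alpha_s(i,j) \pmod n.
\]
The first two form a linear system in $(i,j)$. I plan to verify by inspection that for every one of the $55$ slots the pair of forms $(\beta_s,\gamma_s)$ is linearly independent in $(i,j)$: the pairs that actually appear are drawn from a short list such as $(i,j)$, $(j,k-i)$, $(i,i+j-k)$, $(i+j-k,2k-2i)$, $(k-i,2k-i-j)$, $(j,i+j-k)$, $(i+j,i)$, $(i+j,\tfrac{i}{2})$, $(\tfrac{i}{2},k-\tfrac{i}{2})$, $(k-\tfrac{i}{2},i+j)$, $(j+\tfrac{i}{2},k-i-j)$ and their odd-$i$ analogues, each of which has nonzero determinant. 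Consequently $(i,j)$ is uniquely determined by $(a,b)$, after which $v$ is uniquely determined by $w$. So each slot contributes at most one triple $(v,i,j)$ with $e \in E(v,i,j)$.

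Summing over the $55$ slots gives at most $55$ triples $(v,i,j)$ such that $e \in E(v,i,j)$. The reflected sets $E^-(v,i,j)$ enjoy the same structural description (with ``clockwise'' replaced by ``anticlockwise''), so an identical slot-by-slot argument produces at most $55$ further triples. Adding yields the bound $C_1 = 55 + 55 = 110$, as required.

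The only real obstacle is the verification that each of the $55$ coefficient matrices is invertible. This is routine but tedious; it is shortened considerably by observing that only a small collection of distinct linear forms in $(i,j)$ ever appears as a $\beta_s$ or $\gamma_s$, and that any two of them can be checked pairwise. A minor point worth noting is that some slots may contribute $0$ rather than $1$, because the unique solution $(i,j)$ might lie outside $[1,k-1]^2$ or violate the case hypothesis ($i+j\ge k$, parity of $i$, etc.); this only strengthens the bound.
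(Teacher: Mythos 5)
Your overall strategy is exactly the paper's: the paper also treats the $55$ listed edges as slots (it calls them $F^t(v,i,j)$), claims that for each fixed slot the triple $(v,i,j)$ can be recovered uniquely from the edge, and then applies the Pigeonhole Principle to get the bound $55$, doubled to $C_1=110$ to account for the reflected sets $E^-(v,i,j)$. So structurally you have reproduced the published argument.

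However, the verification you dismiss as ``routine but tedious'' in fact fails for some slots, and your explicit claim is false for a pair you yourself list: $(\beta_s,\gamma_s)=(\tfrac{i}{2},k-\tfrac{i}{2})$ has \emph{zero} determinant as a system in $(i,j)$, since both forms depend on $i$ alone. These are the slots $e(v+2k+i+2j,\tfrac{i}{2},k-\tfrac{i}{2})$ and $e(v+3k+i,\tfrac{i}{2},k-\tfrac{i}{2})$ in the case $i+j<k$ with $i$ even, together with their odd-$i$ analogues using $\tfrac{i-1}{2}$. For such a slot, matching $e=e(w,a,b)$ forces $i=2a$ (and requires $b=k-a$), but gives no information about $j$: for instance $e(v+3k+i,\tfrac{i}{2},k-\tfrac{i}{2})$ is literally the same vertex set for every $j$, so this one edge lies in $E(v,i,j)$ for \emph{every} $j$ with $i+j<k$, i.e.\ the slot contributes up to $k-2$ triples rather than at most one, and the slot-by-slot count no longer yields a bound independent of $k$. (The fifth slot similarly yields a one-parameter family $(v,i,j)=(w-2k-i-2j,\,2a,\,j)$.) To be fair, the paper's own proof is equally terse here---it asserts that $v$, $i$, $j$ can always be worked out uniquely from $u$, $a$, $b$, which is likewise not true for these particular slots---so you have faithfully mirrored the argument; but since your writeup hangs everything on the nonvanishing of these determinants, the gap must be flagged: edges of the form $e(u,a,k-a)$ genuinely lie in $\Theta(k)$ of the sets $E(v,i,j)$ as they are defined in Section~\ref{MMSConstructionSection}, so repairing the proof requires either treating these slots (or these special edges) separately or adjusting the offending members of $E(v,i,j)$, not merely finishing the determinant check.
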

\begin{proof}
Notice that there are 55 edges mentioned in the definition of $E(v,i,j)$.
For $t=1, \dots, 55$, let $F^t(v,i,j)$ be the singleton containing the $t$th edge in the definition of $E(v,i,j)$, i.e. $F^1(v,i,j)=\{e(v,i,j)\}$, $F^2(v,i,j)= \{e(v+k+j,i,i+j-k)\}, \dots, F^{55}(v,i,j)=\{e(v+2k+i, j, k-i)\}$.
This definition is purely formal---for certain $i$ and $j$, it is possible that an edge in $F^t(v,i,j)$ is not an edge of $\Hyper{H}_{n,k}$ (for example $F^3(v,i,j)$ contains the edge $e(v+k+i+j, i+j-k, 2k-2i)$ which is not an edge of $\Hyper{H}_{n,k}$ if $2k-2i\geq k$).  Similarly it is possible for $F^t(v,i,j)$ to be empty for certain $i$ and~$j$---for example $F^{52}(v,i,j)$ should contain $e(v+k+i+j, \frac{i-1}{2}, k-i-j)$ which is not defined when $i$ is even.

Clearly $E(v,i,j)\subseteq \bigcup_{t=1} ^{55} F^t(v,i,j)$  holds. 
 Also, it is straightforward to check that for fixed $t$, the sets $F^t(v,i,j)$ are all disjoint for $v \in V(\Hyper{H}_{n,k})$, and $i,j\in [1, k-1]$.  
Indeed for fixed $t$, if we have $e(u,a,b)\in F^t(v,i,j)$, then it is always possible to work out $v, i$, and $j$ uniquely in terms of $u$, $a$, and $b$.
 These two facts, together with the Pigeonhole Principle imply that the edge $e$ can be contained in at most  $55$ of the sets $E(v,i,j)$ for $v \in V(\Hyper{H}_{n,k})$, and $i,j\in [1, k]$.  The lemma follows, since $C_1\geq 2\cdot 55 =110$.
\end{proof}

A useful corollary of Lemma~\ref{spreadout} is that an edge $e$ can be contained in at most $110$ of the sets $E(v)$ for  $v\in V(\Hyper{H}_{n,k})$. \label{MMSConstructionSection}

\section{Hypergraphs of order $O(k^4)$ with the MMS-property}
In this section we prove Theorem~\ref{MMSMainTheorem}, with a weaker bound of $n\geq 14k^4$.  This proof has many of the same ideas as the proof of Theorem~\ref{MMSMainTheorem}, but is much shorter.  We therefore present it in order to illustrate the techniques that we will use in proving Theorem~\ref{MMSMainTheorem}, and hopefully aid the reader to understand that theorem.
\begin{theorem}\label{MMSQuartic}
For $n\geq 14k^4$, and every function $f:V(\Hyper{H}_{n,k})\to \mathbb{R}$ which satisfies $\sum_{x \in V(\Hyper{H}_{n,k})} f(x)\geq 0$ we have $e_f ^+(\Hyper{H}_{n,k})\geq k(k-1)^2$.
\end{theorem}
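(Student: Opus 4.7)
The plan is to proceed by contradiction: assume $e_f^+(\Hyper{H}_{n,k}) < k(k-1)^2$ and derive $\sum_u f(u) < 0$.

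First I would show that very few vertices are ``bad''. Call $v$ bad if for every $(i,j) \in [1,k-1]^2$ and every sign $\varepsilon \in \{+,-\}$, the set $E^{\varepsilon}(v,i,j)$ contains at least one nonnegative edge; call $v$ good otherwise. By Lemma~\ref{spreadout}, each of the fewer than $k(k-1)^2$ nonnegative edges lies in at most $C_1 = 110$ of the sets $E(v,i,j)\cup E^-(v,i,j)$. Double-counting pairs (nonnegative edge, set containing it) bounds the number of triples $(v,i,j)$ for which $E(v,i,j)\cup E^-(v,i,j)$ contains a nonnegative edge by $110\, k(k-1)^2$. Since a bad vertex $v$ accounts for all $(k-1)^2$ values of $(i,j)$, averaging gives at most $110k$ bad vertices, far fewer than $n \geq 14k^4$.

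Next, for each good vertex $v$, pick a witness $(i,j,\varepsilon)$ so that $E^{\varepsilon}(v,i,j)$ has all negative edges, and apply Lemma~\ref{nonnegativeintervals}. Part~(iii) always gives a negative interval sum $f([v,v+4k-1]) < 0$; part~(iv) gives $f([v,v+4k+j-1]) < 0$ when the sub-interval $f([v+i,v+i+j-1])$ is negative; and part~(v) gives $f([v,v+5k-j-1]) < 0$ when that sub-interval is nonnegative. The upshot is that a good $v$ affords a negative interval starting at $v$ of any length in some rich subset of $\{4k,4k+1,\dots,5k-1\}$, together with reflected versions ending at $v$ for the $E^-$ side.

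The heart of the argument is to use this flexibility to partition $\mathbb{Z}_n$ exactly into intervals $[v_1,v_1+L_1-1],\dots,[v_t,v_t+L_t-1]$ with $v_{\ell+1}=v_\ell+L_\ell$, each $v_\ell$ good, each $L_\ell\in\{4k,\dots,5k-1\}$, and $f([v_\ell,v_\ell+L_\ell-1]) < 0$. Summing then gives $\sum_u f(u) = \sum_\ell f([v_\ell,v_\ell+L_\ell-1]) < 0$, contradicting $\sum_u f(u) \geq 0$. The partition is built greedily: starting from any good vertex $v_1$, pick $L_1\in\{4k,\dots,5k-1\}$ so that $v_1+L_1$ is again good and so that Lemma~\ref{nonnegativeintervals} delivers a witness of negativity for that length, then repeat.

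The main obstacle is completing this greedy partition, because the flexibility in length is only $k-1$ while a cluster of bad vertices could be as long as $110k$. This is where the slack in the hypothesis $n\geq 14k^4$ enters: since bad vertices total at most $110k$, most segments of $\mathbb{Z}_n$ of length $O(k^2)$ contain plenty of good vertices, and by occasionally spending several ``short'' length adjustments in succession (combining parts (i)--(v) of Lemma~\ref{nonnegativeintervals} across multiple consecutive good vertices to create an effective jump larger than $k-1$), one can route the partition around any cluster of bad vertices, with total amortized cost comfortably absorbed when $n$ is of order $k^4$. At the very end one may need to close the cycle with at most a constant number of extra adjustments; the quartic bound is what makes the error terms in this bookkeeping negligible.
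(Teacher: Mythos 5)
Your opening count of bad vertices is fine, but the heart of your argument --- greedily partitioning $\mathbb{Z}_n$ \emph{exactly} into negative intervals of lengths in $\{4k,\dots,5k-1\}$, each anchored at a good vertex --- does not go through, for two concrete reasons. First, a good vertex does not offer ``any length in some rich subset of $\{4k,\dots,5k-1\}$'': your notion of goodness guarantees only \emph{one} witness pair $(i,j)$ with $E(v,i,j)$ all negative, and for that pair Lemma~\ref{nonnegativeintervals} yields negative intervals of lengths $2k$, $3k$, $4k$ and exactly one of $4k+j$ or $5k-j$ --- which of the two is dictated by the sign of $f([v+i,v+i+j-1])$, not chosen by you. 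So the per-step menu can consist of just four lengths, only one of them not a multiple of $k$, and hitting total length exactly $n$ (in particular the residue of $n$ modulo $k$) while landing on a good vertex at every step is not a matter of ``flexibility $k-1$ per step''. Second, and fatally, every step has length at most $5k-1$, so your partition is forced to place a left endpoint inside any contiguous block of bad vertices of length at least $5k$. Your counting bounds only the \emph{total} number of bad vertices by $O(k)$, not their clustering: since each nonnegative edge can serve up to $C_1$ of the sets $E^{\pm}(v,i,j)$, a single bad block of length $20k$, say, needs only about $2\cdot 20k(k-1)^2/C_1<k(k-1)^2$ nonnegative edges, so it cannot be excluded under the hypothesis. ``Combining parts (i)--(v) across consecutive good vertices'' cannot manufacture a longer jump, because every interval produced by Lemma~\ref{nonnegativeintervals} starts at the given good vertex and has length at most $5k-1$; and the quartic slack in $n$ is irrelevant to both obstructions, which are local.

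For comparison, the paper's proof avoids exact partitioning entirely and never needs good vertices at this stage. Claim~\ref{coverSimple} uses the $2k(k-1)^2$ tilings $\Hyper{D}(v,i,j)$ of long intervals by complementary edge pairs: each edge lies in at most two tilings, so with fewer than $k(k-1)^2$ nonnegative edges some tiling is entirely negative, giving one negative interval covering any prescribed interval with only $2k$ overshoot --- no landing constraints, no exactness. This leaves a single interval $J$ of length at least $12k$ which (in the remaining case) contains no nonnegative edge, and Claim~\ref{transverseSimple} shows $f(J)<0$ by a two-sided density argument: the cut points with negative prefix, and those with negative suffix, each occupy more than half of a common window of length $k$ (because $Q^-\cup Q^+$ meets every pair $\{i,k-i\}$), so they intersect and $J$ splits into two negative pieces. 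That intersection/pigeonhole mechanism is exactly what your proposal lacks for closing the cycle and handling the residue issue, and the tiling argument is what handles regions where your greedy construction has no room to maneuver.
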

\begin{proof}
Suppose for the sake of contradiction that we have a function $f:V(\Hyper{H}_{n,k})\to \mathbb{R}$ satisfying  $\sum_{x \in V(\Hyper{H}_{n,k})} f(x)\geq 0$ such that we have $e^+_f(\Hyper{H}_{n,k})< k(k-1)^2$.

The proof of the theorem rests on two claims.  The first of these says that any sufficiently small interval $I$ in $\mathbb{Z}_n$ is contained in a negative interval of almost the same order as $I$.
\begin{claim} \label{coverSimple}
Let $I$ be an interval in $\mathbb{Z}_n$ such that $|I|\leq n-2k$.  Then there is an interval $J= [j_1, j_t]$ which satisfies the following:
\begin{enumerate}[\normalfont(i)]
\item $|J|\leq |I|+2k$.
\item $I\subseteq J$.
\item $ f(J) < 0$.
\end{enumerate}
\end{claim}
\begin{proof}
Without loss of generality, we may assume that $I$ is the interval $[2k, 2km+l]$ for some $l\in[0,2k-1]$ and $m\leq\frac{n}{2k}-1$.  First we will exhibit $2k(k-1)^2$ sets of vertex-disjoint edges covering $I$.

For  $v \in \{0\dots 2k-1\}$, $i, j\in\{1, \dots, k-1\}$ we let
\begin{align*}
\Hyper{D}(v,i,j) &= \bigcup_{t=0}^m \Big( e(v+2tk,i,j)\cup e(v+2tk+i,j, k-i) \Big)
%\{e(v,i,j), e(v+i&,j, k-i),e(v+2k,i,j), e(v+2k+i,j, k-i),\dots \\
%&\dots, e(v+2km,i,j),  e(v+2km+i,j, k-i)\}. 
\end{align*}
Notice that an edge $e(u,a,b)$ is contained only in the sets $\Hyper{D}(u\pmod{2k},a,b)$ and $\Hyper{D}(u-k+b\pmod{2k},k-b,a)$.  Therefore, since there are at less than $k(k-1)^2$ nonnegative edges in $\Hyper{H}_{n,k}$, there are some $v_0$,$i_0$ and $j_0$ for which the set $\Hyper{D}(v_0,i_0,j_0)$ contains only negative edges.  Letting $J=\bigcup  \Hyper{D}(v_0,i_0,j_0) =[v_0,v_0+2k(m+1)]$ implies the claim.
\end{proof}

The second claim that we need shows that any sufficiently large interval which does not contain nonnegative edges in $\Hyper{H}_{n,d}$ must be negative.
\begin{claim} \label{transverseSimple}
Let $I= [i_1,  i_m]$ be an interval in  $\mathbb{Z}_n$ which satisfies the following:
\begin{enumerate}[\normalfont(i)]
\item $|I|\geq 12k$.
\item There are no nonnegative edges of $\Hyper{H}_{n,k}$ contained in $I$.
\end{enumerate}
We have that $f(I) < 0$.
\end{claim}
\newcommand{\Qpos}{Q^-}
\newcommand{\Qneg}{Q^+}

\begin{proof}
Let $R_0=\{v\in I: f([0,v-1])<0\}$ and $R_m=\{v\in I: f([v,m])<0\}$.
Let $\Qpos=\{i\in[1,k-1]: f([1,i])< 0\}$ and $\Qneg=\{k-i\in[1,k-1]: f([1,i])\geq 0\}$.

Since $I$ contains only negative edges, parts (iv) and (v) of Lemma~\ref{nonnegativeintervals} imply that we have that $(\Qpos\cup \Qneg)+4k\subseteq R_0$.  Part (iii) of Lemma~\ref{nonnegativeintervals} implies that $4k\in R_0$.
Then, parts (i) and (ii) of Lemma~\ref{nonnegativeintervals} imply that $(\Qpos\cup \Qneg\cup \{0\})+tk\subseteq R_0$ for any $t \in\left\{6,7,\dots, \left\lfloor\frac{m}{k}\right\rfloor-1\right\}$.  This implies that we have $R_0\cap[u,u+k-1]\geq |\Qpos\cup \Qneg\cup \{0\}|$ for any $u\in[6k,m-k-1]$.

Notice that $\Qpos\cup \Qneg$ contains at least one element from each of the sets $\{1, k-1\}, \dots, \left\{\left\lfloor\frac{k}{2}\right\rfloor, \left\lceil\frac{k}{2}\right\rceil\right\}$.  This implies that for every $u\in\{6k,\dots,m-k-1\}$ we have 
$$|R_0\cap [u, u+k-1]|\geq |\Qpos\cup \Qneg\cup \{0\}|\geq \left\lfloor\frac{k}{2}\right\rfloor+1>\frac{k}{2}.$$
Similarly we obtain $|R_m\cap [u, u+k-1]|>\frac{k}{2}$ for every $u \in \{k,\dots, m-7k\}$.  By choosing $u=6k$, we have that $|R_0\cap [6k, 7k-1]|,|R_m\cap [6k, 7k-1]|>\frac{k}{2}$, and hence there exists some $i\in[6k,7k-1]$ such that $i\in R_0,  R_m$ hold.  This gives us $f([0,m])=f([0,i])+f([i+1,m])<0$, proving the claim.
\end{proof}
We now prove the theorem.   
Suppose that every interval of order $14k$ in $\Hyper{H}_{n,k}$ contains a nonnegative edge.  Since there are at least $\frac{n}{14k}\geq k^3$ such disjoint intervals in $\Hyper{H}_{n,k}$, we have at least $k^3$ nonnegative edges in $\Hyper{H}_{n,k}$, contradicting our initial assumption that $e^+_f(\Hyper{H}_{n,k})< k(k-1)^2$.

Suppose that there is an interval $I$ of order $14k$ in $\Hyper{H}_{n,k}$ which contains only negative edges.  Applying Claim~\ref{coverSimple} to $V(\Hyper{H}_{n,k}\setminus I)$ we obtain an interval $J\subseteq I$ such that $f(V({\Hyper{H}_{n,k})}\setminus J)<0$ and $|J|\geq 12k$.  Applying Claim~\ref{transverseSimple} to $J$ we obtain that $f(J)<~0$. Therefore, we have $f(V(\Hyper{H}_{n,k}))= f(J)+f(V(\Hyper{H}_{n,k})\setminus J)<0$ contradicting the assumption that $f(V(\Hyper{H}_{n,k}))\geq 0$ in the theorem
\end{proof}

It is not hard to see that Claim~\ref{transverseSimple} would still be true if we allowed $I$ to contain a small number of nonnegative edges.  The proof of Theorem~\ref{MMSMainTheorem} is similar to the proof of Theorem~\ref{MMSQuartic} since it also consists of two main claims which are analogues of Claims~\ref{coverSimple} and~\ref{transverseSimple}.  However the analogue of Claim~\ref{transverseSimple} is much stronger since it allows for $O(k^3)$ nonnegative edges to be contained in $I$.  This is the main improvement in the proof of Theorem~\ref{MMSQuartic} which is needed to obtain the linear bound which we have in Theorem~\ref{MMSMainTheorem}.

\label{MMSSimpleProofSection}

\section{Proof of Theorem~\ref{MMSMainTheorem}}\label{MMSProofSection}
\newcommand{\ethree}{\epsilon_3}
\newcommand{\efour}{(7\epsilon_1+ \epsilon_2)}
\newcommand{\eQSum}{\ethree^2}
\newcommand{\esix}{\epsilon_4}
\newcommand{\eFirstPow}{\epsilon_5}

In this section we use ideas from Sections~\ref{MMSConstructionSection} and~\ref{MMSSimpleProofSection} in order to Theorem~\ref{MMSMainTheorem}.
\begin{proof}[Proof of Theorem~\ref{MMSMainTheorem}]

For convenience, we fix the following constants for the duration of the proof.
$$
\begin{array}{lcl}
C_0=10^{46} & &
\epsilon_0=  10^{-9} \\
C_1=110& &
\epsilon_1=  10^{-18} \\
C_2=10^{16}& &
\epsilon_2= 10^{-6} \\
C_3=28 & &
\epsilon_3= 10^{-2} \\
 & &
\epsilon_4= 0.1 \\
& & 
\epsilon_5=0.25
\end{array}
$$

Let $n\geq C_0 k$, and let $\Hyper{H}_{n,k}$ be the hypergraph defined in Section~\ref{MMSConstructionSection}.
Recall that for any vertex $v\in V(\Hyper{H}_{n,k})$, we have $|E(v)|= \Theta(k^2)$.

\begin{definition}
We say that a vertex $v$ in $\Hyper{H}_{n,d}$ is \textbf{bad} if at least $\epsilon_0 k^2$ of the edges in $E(v)$ are nonnegative and \textbf{good} otherwise. 
\end{definition}

Let $G_{\Hyper{H}}$ be the set of good vertices in $\Hyper{H}_{n,k}$.

Suppose that we have a function $f:V(\Hyper{H}_{n,k})\to \mathbb{R}$ such that we have $e^+ _f(\Hyper{H}_{n,k})<k(k-1)^2$.  We will show that $f(V(\Hyper{H}_{n,k}))<0$ holds.
The proof of the theorem consists of the following two claims.

\begin{claim} \label{cover}
Let $I$ be an interval in $\mathbb{Z}_n$ such that $|I|\leq n-4C_2k$.  There is an interval $J= [j_1, j_t]$ which satisfies the following:
\begin{enumerate}[\normalfont(i)]
\item $|J|\leq |I|+4C_2k$.
\item $I\subseteq J$.
\item Both $j_1-1$ and $j_t+1$ are good.
\item $ f(J) < 0$.
\end{enumerate}
\end{claim}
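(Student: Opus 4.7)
The plan is to adapt the chain argument of Claim~\ref{coverSimple} by enriching the family of candidate chains, allowing both the starting vertex $v$ and the chain length $L$ to vary, and then to show that even after restricting to chains with good endpoints one can still find an all-negative chain. First I bound the bad vertices: by the corollary to Lemma~\ref{spreadout}, each nonnegative edge of $\Hyper{H}_{n,k}$ lies in at most $C_1$ sets $E(v)$, and each bad vertex $v$ has at least $\epsilon_0 k^2$ nonnegative edges in $E(v)$; combined with $e^+_f(\Hyper{H}_{n,k}) < k(k-1)^2$, double counting gives at most $C_1 k(k-1)^2/(\epsilon_0 k^2) \le C_1 k/\epsilon_0$ bad vertices, which is negligible compared to $C_2 k$.

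After translating so that $I = [a, b]$ begins at $a = 2k$ (using the rotational symmetry of $\Hyper{H}_{n,k}$), I consider the family of chains
\[
\Hyper{D}(v, i, j) = \bigcup_{t=0}^{m} \bigl(e(v+2tk, i, j) \cup e(v+2tk+i, j, k-i)\bigr)
\]
for every $(i, j) \in [1, k-1]^2$, every length $L = 2(m+1)k$ with $|I| \le L \le |I| + 4C_2 k$, and every starting vertex $v \in [b - L + 1, a]$. Each such chain is a vertex-disjoint edge partition of $[v, v + L - 1]$, and this interval automatically contains $I$. Let $\mathcal{F}$ be the subfamily in which both $v - 1$ and $v + L$ are good; since the bad vertices form only a $(C_1/\epsilon_0 C_2)$-fraction of any width-$C_2 k$ window, the restriction to $\mathcal{F}$ costs only a negligible share of the chains.

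A double-counting argument analogous to the one in Claim~\ref{coverSimple} now locates an all-negative chain in $\mathcal{F}$. Each nonnegative edge $e(u, a', b')$ belongs to $\Hyper{D}(v, i, j)$ only when $(i, j) \in \{(a', b'), (k - b', a')\}$ and $v$ lies in one of two residue classes modulo $2k$ determined by $u$. Together with the range constraints on $v$ and $L$, this bounds the number of chains of the family containing any single nonnegative edge by $O(C_2^2)$, so the total number of chains of $\mathcal{F}$ containing some nonnegative edge is at most $O(C_2^2) \cdot e^+_f(\Hyper{H}_{n,k})$. On the other hand, an explicit count of admissible $(v, L, i, j)$ tuples shows that $|\mathcal{F}| \cdot (k-1)^2$ is of the same order $\Theta(C_2^2 k(k-1)^2)$ but with a strictly larger constant in front, using both the strict inequality $e^+_f < k(k-1)^2$ and the gap $C_2 \gg C_1/\epsilon_0$. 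Hence some chain $\Hyper{D}(v_0, i_0, j_0) \in \mathcal{F}$ of some length $L_0$ consists entirely of negative edges, and setting $J = [v_0, v_0 + L_0 - 1]$ yields $I \subseteq J$, $|J| = L_0 \le |I| + 4C_2 k$, both endpoints $j_1 - 1 = v_0 - 1$ and $j_t + 1 = v_0 + L_0$ good by membership in $\mathcal{F}$, and $f(J) < 0$ as a sum of negative edge weights.

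The hard part will be making the double counting in the third paragraph come out with strict inequality after paying the cost of the good-endpoint restriction: the constant in the lower bound for $|\mathcal{F}|\cdot(k-1)^2$ must exceed the constant in the upper bound for the bad chains. This is the motivation for the very generous value $C_2 = 10^{16}$, which ensures that bad vertices are a negligible share of any width-$C_2 k$ window while still leaving comfortable slack in the residue-class counting.
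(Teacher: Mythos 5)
There is a genuine gap, and it sits exactly where you flag ``the hard part'': the enriched double count does not close, and no choice of $C_2$ can make it close, because the per-edge multiplicity grows at the same rate as the family. Count at a fixed admissible length $L$ (a multiple of $2k$ with $|I|\le L\le |I|+4C_2k$): the number of your chains of that length containing $I$ is $(L-|I|+1)(k-1)^2$, while a single nonnegative edge $e(u,a',b')$ with $u\in I$ lies in every chain $\Hyper{D}(v,i,j)$ of that length with $(i,j)\in\{(a',b'),(k-b',a')\}$ and $v$ in the corresponding residue class modulo $2k$, i.e.\ in up to $2\big(\lfloor (L-|I|)/(2k)\rfloor+1\big)$ of them. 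Since all of the up to $k(k-1)^2-1$ nonnegative edges may start inside $I$, the union bound for spoiled chains at length $L$ is roughly $(k-1)^2(L-|I|)+2k(k-1)^2$, which already exceeds the total $(L-|I|+1)(k-1)^2$ at every single length; summing over lengths gives the same leading constant $4C_2^2k(k-1)^2$ on both sides with the lower-order terms going the wrong way, and the good-endpoint restriction only removes further chains from the left-hand side. Claim~\ref{coverSimple} survives because its family has exactly $2k(k-1)^2$ chains and each edge spoils at most $2$, leaving slack $2$; your enlargement multiplies both the family size and the per-edge multiplicity by $\Theta(C_2^2)$, so that slack does not scale up, and the assertion that the lower bound has ``a strictly larger constant in front'' is false. (Your preliminary count of at most $C_1k/\epsilon_0$ bad vertices is correct but irrelevant to this obstruction.)

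This is precisely why the paper's proof of Claim~\ref{cover} is not a rerun of Claim~\ref{coverSimple} with more chains. It works residue by residue modulo $2k$ and, crucially, does not ask for one pair $(i,j)$ that works along an entire chain: a residue $r$ is called unblocked if at every block position $t\in[-C_2,m+C_2]$ \emph{some} pair $(i,j)$, allowed to depend on $t$, makes both $e(2tk+r,i,j)$ and $e(2tk+r+i,j,k-i)$ negative, which yields negative intervals $[2t_1k+r,2t_2k+r-1]$ for all $C_2+1$ candidate endpoints on each side. Failure is then converted into nonnegative edges rather than into a counting of spoiled chains: if $r$ is blocked, a single block position forces, for every $(i,j)$, one of the two edges to be nonnegative, giving a set $P(r)$ of at least $(k-1)^2$ nonnegative edges; if $r$ is unblocked but the claim still fails for $r$, then all $C_2+1$ candidate endpoint-neighbours on one side are bad, and the definition of bad together with Lemma~\ref{spreadout} yields at least $C_2\epsilon_0k^2/C_1$ nonnegative edges in $P(r)$. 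An overlap analysis (each edge lies in at most $2$ blocked sets $P(r)$, at most $C_1$ unblocked ones, plus a separate bound on the blocked--unblocked intersection) then sums these contributions over the $2k$ residues to at least $k(k-1)^2$ nonnegative edges, contradicting $e^+_f(\Hyper{H}_{n,k})<k(k-1)^2$. If you want to repair your write-up, the block-by-block freedom in $(i,j)$ and the conversion of bad endpoints into many \emph{distinct} nonnegative edges are the two ingredients your scheme is missing.
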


\begin{claim} \label{transverse}
Let $I= [i_1,  i_m]$ be an interval in  $\mathbb{Z}_n$ which satisfies the following:
\begin{enumerate}[\normalfont(i)]
\item $C_3 k \leq |I| \leq (C_3 + 4C_2) k$.
\item Both $i_1$ and $i_m$ are good.
\item Every subinterval of $I$ of order $k$,  contains at most $\epsilon_1 k$ bad vertices.
\end{enumerate}
We have that $f(I) < 0$.
\end{claim}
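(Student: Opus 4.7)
My plan is to follow the template of Claim~\ref{transverseSimple}. Let $R_\ell = \{v \in I : f([i_1, v - 1]) < 0\}$ and $R_r = \{v \in I : f([v, i_m]) < 0\}$; any $v \in R_\ell \cap R_r$ yields $f(I) = f([i_1, v-1]) + f([v, i_m]) < 0$, so it suffices to produce a single such $v$. As in the simple proof I will do this by exhibiting a $k$-subinterval $W \subseteq I$ in which $|R_\ell \cap W|, |R_r \cap W| > k/2$, forcing $R_\ell \cap R_r \cap W \neq \emptyset$ by pigeonhole.

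To establish the density of $R_\ell$, I iterate Lemma~\ref{nonnegativeintervals} along a chain of good vertices starting at $v_0 = i_1$. For any good vertex $v$, fewer than $\epsilon_0 k^2$ of the pairs $(i, j) \in [1, k-1]^2$ make $E(v, i, j)$ contain a nonnegative edge, so for all remaining $(i, j)$ the premise of Lemma~\ref{nonnegativeintervals} holds at $v$: parts (i)--(iii) have conclusions independent of $(i, j)$, while parts (iv)--(v) yield valid conclusions for all $j \in [1, k-1]$ outside a set of size $O(\epsilon_0 k)$. Hence whenever $v \in R_\ell$ is good, we obtain $\{v + 2k, v + 3k, v + 4k\} \subseteq R_\ell$, and for each usable $j$ one of $\{v + 4k + j, v + 5k - j\}$ lies in $R_\ell$ according to the sign of $f([v + i, v + i + j - 1])$; pairing $j$ with $k - j$ to control the only possible collisions between (iv)- and (v)-contributions then gives $|R_\ell \cap [v + 4k, v + 5k - 1]| > k/2$. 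I continue the chain by choosing $v_{t+1}$ to be any good vertex in $R_\ell \cap [v_t + 4k, v_t + 5k - 1]$; such a vertex exists because condition~(iii) of Claim~\ref{transverse} bounds the bad vertices in that $k$-window by $\epsilon_1 k$, which is far smaller than the $> k/2$ elements of $R_\ell$ just established. An entirely symmetric argument using the anticlockwise sets $E^-(v, i, j)$ starting at $i_m$ builds a leftward chain witnessing the density of $R_r$.

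Because $|I| \geq C_3 k = 28k$, the two chains must meet in the middle of $I$, producing a common $k$-subinterval $W$ with $|R_\ell \cap W| + |R_r \cap W| > k$ and hence $R_\ell \cap R_r \cap W \neq \emptyset$. The main obstacle I anticipate is the quantitative accounting in the density estimate: the bound ``$>k/2$'' has a margin of only $1$ (the single extra element $v + 4k$ supplied by part~(iii)) against cumulative losses of order $\epsilon_0 k$ arising from $j$'s with no usable $(i, j)$ and from collisions between $v + 4k + j$ and $v + 5k - j'$ when $j + j' = k$. The chosen $\epsilon_0 = 10^{-9}$ makes this margin comfortable in the moderate-$k$ regime, but for very large $k$ one may need to strengthen the density estimate by combining contributions from several consecutive chain anchors or by pulling in extra negative intervals from the anticlockwise sets; the alignment between the rightward and leftward chains' density windows also has to be verified carefully. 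The buffer $C_3 = 28$ should leave ample room for boundary effects at the first and last few chain steps.
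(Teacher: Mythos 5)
There is a genuine gap, and you have in fact put your finger on it yourself without resolving it: the quantitative accounting does not close, and there is no ``moderate-$k$ regime'' to retreat to, since the claim must hold for every $k$ (the small-$k$ cases are not handled elsewhere). Iterating Lemma~\ref{nonnegativeintervals} from a good anchor gives a density in a $k$-window of at most $\lfloor k/2\rfloor+1$, i.e.\ a margin of only $O(1)$ above $k/2$, while each window already costs you $\Theta(\epsilon_1 k)$ for bad vertices plus $\Theta((\epsilon_0 C_1/\epsilon)k)$ for the $j$'s with no usable $i$; for $k$ larger than an absolute constant these losses swamp the margin and your bound $|R_\ell\cap W|>k/2$ fails at the very first step. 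The problem is compounded by the length of $I$: condition (i) allows $|I|$ up to $(C_3+4C_2)k\approx 4\cdot10^{16}k$, not $\approx 28k$, so the two chains do not ``meet in the middle'' after a bounded number of steps --- one of the two density sets has to be propagated across roughly $4C_2$ windows, accumulating a loss of order $\epsilon_1\cdot 4C_2\,k\approx 0.04\,k$ (this is exactly the $\epsilon_1(4C_2+C_3+4)k$ term in the paper). No amount of shrinking $\epsilon_0,\epsilon_1$ repairs this, because the deficit is a constant fraction of $k$ while your surplus is a single element.

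The missing idea is the mechanism the paper uses to manufacture a surplus that is itself a constant fraction of $k$: an additive-combinatorics boost. The paper defines robust direction sets $Q_{\epsilon}(v)$ (thresholded over many shifts $i$, so they survive bad vertices), shows via Claim~\ref{firstpower} and Claim~\ref{basicR} that for a good $v$ a translate of $Q_{\epsilon_5}(v)$ lands in $R(v)$ up to an $\epsilon_2 k$ error, and then --- crucially --- composes two such steps (anchors $0$ and $7k$) in Claim~\ref{Qsumset} to place a translate of a restricted sumset $Q_{\epsilon_5}(0)+_S Q_{2\epsilon_5}(7k)$ inside $R(0)$. Claim~\ref{sumsetgrowth}, proved with Grynkiewicz's theorem (Theorem~\ref{gryn}) and Kneser's theorem (Theorem~\ref{kneser}), a Pollard/Lev-type statement, shows this restricted sumset modulo $k$ has size at least $(\tfrac12+\epsilon_4)k$ with $\epsilon_4=0.1$. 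It is this extra $\epsilon_4 k$, not the single extra element from part (iii) of Lemma~\ref{nonnegativeintervals}, that absorbs the cumulative $\epsilon_1(4C_2+C_3+4)k$ propagation loss and leaves both $|R(0)\cap[m-17k,m-16k-1]|$ and $|R^-(m)\cap[m-17k,m-16k-1]|$ strictly above $k/2$, at which point your pigeonhole endgame is exactly the paper's. Your closing remark that one ``may need to strengthen the density estimate by combining contributions from several consecutive chain anchors'' is precisely where the real work lies; without an argument of sumset-growth type (and one valid for composite $k$, hence Kneser/Grynkiewicz rather than Cauchy--Davenport-style tools), the proposal does not prove the claim.
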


Once we have these two claims, the theorem follows easily:

First suppose that no intervals in $\mathbb{Z}_n$ of order $(C_3+4C_2)k$ satisfies condition (iii) of Claim~\ref{transverse}.  This implies that there are at least $\epsilon_1 C_0 k/(C_3+4C_2)$ bad vertices in $\Hyper{H}_{n,k}$.  Then Claim~\ref{spreadout} together with the definition of ``bad'' implies that there are at least $\epsilon_0 \epsilon_1 C_0k^3/C_1(C_3+4C_2)$ nonnegative edges in $\Hyper{H}_{n,k}$.  However, since $\epsilon_0 \epsilon_1 C_0/C_1(C_3+4C_2)\geq 1$, this contradicts our assumption that $e^+ _f(\Hyper{H}_{n,k})<k(k-1)^2$.

Now, suppose that there is an interval $I$ of order $(C_3+4C_2) k$ which satisfies condition~(iii) of Claim~\ref{transverse}.  
Notice that all subintervals of $I$ will also satisfy condition (iii) of Claim~\ref{transverse}.  
Applying Claim~\ref{cover} to $V(\Hyper{H}_{n,k}) \setminus I$ gives an interval $J\subseteq I$ which satisfies all the conditions of Claim~\ref{transverse} and also $ f(V(\Hyper{H}_{n,k})\setminus J)<0$.  
Applying Claim~\ref{transverse} to $J$ implies that we also have $f(J)<0$. 
We have $\sum_{v\in \Hyper{H}_{n,k}} f(v)= f(V(\Hyper{H}_{n,k})\setminus J)+f(J)<0$, contradicting our initial assumption and proving the theorem.

It remains to prove Claims \ref{cover} and \ref{transverse}.

\begin{proof}[Proof of Claim~\ref{cover}]
%We define a set $T\subseteq [1, 2k]$ as follows.
 %$$T=  \{i \in \{1, \dots, 2k\} : \text{for each } t\in\{0, \dots, \frac{m+1000}{k} \text{ one of the  edges }  e(tk,i,j) \text{ or } e(tk+i,j,k-i) \text{ is negative.}\} \}$$

Without loss of generality, we may assume that $I$ is the interval $[0, 2km+l]$ for some $l\in[0,2k-1]$ and $m<\frac{n}{2k}-2C_2$. 
We partition  $[1, 2k]$ into two sets as follows.

\begin{definition}
For $r \in [1, 2k]$ we say that $r$ is \textbf{unblocked} if for every $t\in [-C_2,  m+C_2]$, there are some $i,j \in [1,k-1]$ such that both of the edges   $e(2tk+r,i,j)$ and $e(2tk+r+i,j,k-i)$  are negative.
We say that $r$ is \textbf{blocked} otherwise.
\end{definition}

Notice that if $r$ is unblocked, then for every $t_1\in [-C_2, 0]$ and $ t_2 \in [m, m+C_2 ]$ we have that $f([2t_1k+r, 2t_2k+r-1])<0$.  Therefore the claim holds unless either $2t_1k+r-1$ or $2t_2k+r$ is bad.  Therefore, for each $r$ which is unblocked, we can assume that all the vertices in either $\{r-1-2kC_2,r-1-2k(C_2-1), \dots, r-1\}$ or $\{r+2km, r+2k(m+1), \dots, r+2k(m+C_2)\}$ are bad.

To each $r\in  [1,  2k]$, we assign a set of nonnegative edges, $P(r)$, as follows:
\begin{itemize}
\item If $r$ is blocked, then there is some $t_r\in [-C_2, m+C_2 ]$, such that for every $i,j\in [1,k-1]$ one of the edges $e(2t_rk+r,i,j)$ or $e(2t_rk+r+i,j,k-i)$ is nonnegative.  We let $P(r)$ be the set of these edges.  Notice that this ensures that $|P(r)|\geq(k-1)^2$.  Also, note that for fixed $a$,$b$,$c$ the $P(r)$ can contain at most one edge of the form $e(a+2tk, b,c)$ for any $t\in [-C_2, m+C_2 ]$.

\item If $r$ is unblocked we know that  all the vertices in either $\{r-1-2kC_2,r-1-2k(C_2-1), \dots, r-1\}$ or $\{r+2km, r+2k(m+1), \dots, r+2k(m+C_2)\}$ are bad.  Let $P(r)$ be the set of nonnegative edges in $E(r-1-2kC_2)\cup E(r-1-2k(C_2-1))\cup \dots \cup E(r-1) \cup E(r+2km) \cup E(r+2k(m+1))\cup \dots \cup E(r+2k(m+C_2))$.  Since at least $C_2$ of these vertices are bad, Lemma~\ref{spreadout} together with the Pigeonhole Principle implies that $|P(r)|\geq \frac{C_2 \epsilon_0}{C_1}  k^2$.
\end{itemize}

Notice that an edge $e$ can be in at most $2$ of the sets $P(r)$ for $r$ blocked.  This is because it can be in at most one such set as an edge of the form ``$e(tk+r,i,j)$" and in at most one such set and as an edge of the form ``$e(tk+r+i,j,k-i)$''.
Therefore we have:
\begin{equation} \label{eqblocked}
  \left|\bigcup_{r \text{ blocked}} P(r)\right| \geq  \sum_{r \text{ blocked}}\frac{1}{2}(k-1)^2
\end{equation}

Lemma~\ref{spreadout} implies that an edge $e$ can be in at most $C_1$ of the sets $P(r)$ for $r$ unblocked.  Therefore we have:
\begin{equation}\label{equnblocked}
  \left|\bigcup_{r \text{ unblocked}} P(r)\right| \geq   \sum_{r \text{ unblocked}} \frac{C_2 \epsilon_0}{(C_1)^2}k^2 
\end{equation}

We claim that for any $s\in[1, 2k]$, we have 
\begin{equation}\label{eqEcapP}
\left|\left(\bigcup_{t_\in[-C_2,m+C_2]} E(s+2tk) \right) \cap \left(\bigcup_{r \text{ blocked}} P(r) \right)\right|\leq  2|E(s)| .
\end{equation}

  Indeed, otherwise the Pigeonhole Principle implies that for some $r\in[1, 2k]$, $t_1,$ $t_2,$ $t_3 \in [-C_2,m+C_2]$, and $i, j \in [1,k-1]$ we have three distinct edges $e(r+2t_1k, i, j)$,  $e(r+2t_2k,i,j)$, and $e(r+2t_3k,i,j)$ which are are all contained in $\left(\bigcup_{t_\in[-C_2,m+C_2]} E(s+2tk) \right) \cap \Big(\bigcup_{r \text{ blocked}} P(r) \Big)$.  
This means that there are some $r_1$, $r_2$, and $r_3 \in [1,2k]$ which are blocked, such that $e(r+2t_lk, i, j)\in P(r_l)$ holds for $l = 1$, $2$ and $3$.  
Since each  $r_l$ is blocked, all the edges in $P(r_l)$ are of the form $e(2t'k+r_l,i',j')$ or $e(2t'k+r_l+i',j',k-i')$ for some $t' \in [-C_2, m+C_2]$ and $i', j' \in [1,k-1]$.  This, together with  $e(r+2t_lk, i, j)\in P(r_l)$, implies that we have $r_1, r_2, r_3 \in \{r, r-k+j\}$.  
This means that for some distinct $l, l' \in \{1,2,3\}$, we have $r_l=r_{l'}$, which means that both $e(r+2t_lk, i, j)$ and $e(r+2t_{l'}k, i, j)$ are contained in $P(r_l)$.  However, this contradicts our definition of $P(r_l)$ for $r_l$ blocked which allowed only one edge of the form $e(r+2tk, i, j)$ to be in $P(r_l)$ for fixed $r$, $i$ and~$j$.  This shows that (\ref{eqEcapP}) holds for all $s\in[1,2k]$.

Recall that for all vertices $s$ we have $|E(s)|\leq C_1k^2$.  This, together with (\ref{eqEcapP}) implies that we have 
\begin{align}
\left|\left(\bigcup_{s \text{ unblocked}} P(s) \right) \cap \left(\bigcup_{r \text{ blocked}} P(r) \right)\right| &\leq 
 \left|\left(\bigcup_{\substack{s \text{ unblocked}, \\ t_\in[-C_2,m+C_2]}} E(s+2tk) \right) \cap \left(\bigcup_{r \text{ blocked}} P(r) \right)\right| \notag \\
&\leq \sum_{s \text{ unblocked}} 2|E(s)| \notag \\
&\leq \sum_{s \text{ unblocked}} 2C_1 k^2.  \label{eqintersection}
\end{align}

Putting (\ref{eqblocked}), (\ref{equnblocked}), and (\ref{eqintersection}) together, we obtain:
\begin{align} 
e^+ _f(\Hyper{H}_{n,k}) &\geq  \left|\bigcup_{r \text{ blocked}} P(r)\right|+ \left|\bigcup_{r \text{ unblocked}} P(r)\right| - \left|\left(\bigcup_{s \text{ unblocked}} P(s) \right) \cap \left(\bigcup_{r \text{ blocked}} P(r) \right)\right| \notag\\
&\geq \sum_{r \text{ blocked}} \frac{1}{2}(k-1)^2 +  \sum_{r \text{ unblocked}} \frac{C_2 \epsilon_0}{(C_1)^2}k^2  - \sum_{s \text{ unblocked}} 2C_1 k^2 \notag\\
&\geq \sum_{r \text{ blocked}} \frac{1}{2}(k-1)^2 +  \sum_{r \text{ unblocked}} \frac{1}{2} k^2 \notag\\
&\geq k(k-1)^2. \label{toomanyedges}
\end{align}

The second last inequality follows from $\frac{C_2 \epsilon_0}{(C_1)^2}-2C_1\geq \frac{1}{2}$.  The last inequality follows from the fact that ``the number of blocked vertices" $ + $ ``the number of unblocked vertices" $ = 2k$.  However (\ref{toomanyedges}) contradicts the assumption that there are less than $ k(k-1)^2$ nonnegative edges in $\Hyper{H}_{n,k}$, proving the claim.
\end{proof}

It remains to prove Claim~\ref{transverse}.

\begin{proof}[Proof of Claim~\ref{transverse}]

Without loss of generality, we can assume that $I= [0, m ]$ for some $m\leq (C_3+4C_2)k$.

Recall that we are using notation from additive combinatorics for sumsets and translates.  Except where otherwise stated, sumsets will lie in $\mathbb{Z}$. 
For a set $A\subseteq \mathbb{Z}$, define $$A\bmod(k)=\{b\in[0,k-1]: b\equiv a \bmod(k) \text{ for some } a \in A\}.$$

For each vertex $v$, we define a set of vertices $R(v)$ contained in $I$.
$$R(v)= \{u \in [v+1,m] : f([v, u-1]) < 0 \text{ and } u \text{ is good.}\} $$

$R(v)$ has the following basic properties.
\begin{claim} \label{basicR}
The following hold. 
\begin{enumerate}[\normalfont(i)]
\item If $u>v$ and $u\in R(v)$, we have  $R(u)\subseteq R(v)$.
\item Suppose  that $t\geq 2$ and we have a set $X\subseteq R(v)\cap [w, w+2k-1]$, for some vertex $w$.  There is a subset $X'\subseteq X$, such that we have $|X'|\geq |X|- 2\epsilon_1 kt$ and $X'+t'k \subseteq R(v)$ for every $t'\in\{2, \dots, t\}$.
\item Suppose that we have $X\subseteq [0, 2k-1]$ such that $X+t_0k \subseteq R(0)$ for some $t_0$.  There is a subset $X'\subseteq X \bmod(k)$, such that $X'+ (t_0+3)k \subseteq R(0)$ and $|X'|\geq |X|- 6\epsilon_1 k$. 
\item Suppose that we have $X\subseteq [w, w+k-1]\cap R(0)$ for some $w$.  Then for any $v\geq w+2k$, we have  we have  $|R(0) \cap [v,v+k-1]|\geq |X|-2\epsilon_1(v-w+1)k$.
\end{enumerate}
\end{claim}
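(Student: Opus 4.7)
Part (i) is immediate: for $w \in R(u)$ we have $w \in [u+1,m] \subseteq [v+1,m]$, $w$ is good, and $f([v,w-1]) = f([v,u-1]) + f([u,w-1]) < 0$ since both summands are negative. The engine driving the remaining parts is that when $x$ is good, fewer than $\epsilon_0 k^2$ edges of $E(x)$ are nonnegative, so some $E(x,i,j)$ is entirely negative; Lemma~\ref{nonnegativeintervals}(i)--(iii) then gives $f([x,x+sk-1]) < 0$ for each $s \in \{2,3,4\}$. For any $t' \geq 2$, writing $t' = 2a+3b$ lets us partition $[x,x+t'k-1]$ into blocks of length $2k$ and $3k$, each negative whenever its starting vertex is good. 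For part (ii), since $x \in R(v)$ gives $f([v,x-1]) < 0$, forcing $x+t'k \in R(v)$ for every $t' \in \{2,\ldots,t\}$ reduces to requiring that $x+ck$ be good for each $c \in \{0,2,3,\ldots,t\}$. For each $c \neq 0$ the translate $X+ck$ lies in a width-$2k$ window containing at most $2\epsilon_1 k$ bad vertices (condition (iii) of Claim~\ref{transverse}), so the total exclusion from $X$ is $\leq 2\epsilon_1 k(t-1) \leq 2\epsilon_1 kt$.

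Part (iii) is the mod-$k$ analogue. Split $X = X_a \cup X_b$ with $X_a = X \cap [0,k-1]$ and $X_b = X \cap [k,2k-1]$. For $x \in X_a$, Lemma~\ref{nonnegativeintervals}(ii) at the good vertex $x+t_0k$ gives $f([x+t_0k,x+(t_0+3)k-1]) < 0$, so $x+(t_0+3)k \in R(0)$ provided that endpoint is good; for $x \in X_b$, Lemma~\ref{nonnegativeintervals}(i) analogously yields $x+(t_0+2)k = (x-k)+(t_0+3)k \in R(0)$ when good. All target endpoints live in $[(t_0+3)k,(t_0+4)k-1]$, a width-$k$ window with at most $\epsilon_1 k$ bad vertices, and any pair $\{x,x+k\} \subseteq X$ maps to the same target, so the mod-$k$ collapse contributes no extra loss and the exclusion fits comfortably within the $6\epsilon_1 k$ budget.

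For part (iv), write $v-w = qk+r$ with $q \geq 2$ and $0 \leq r < k$, and apply part (ii) with $t = q+1$ to obtain $X^{(1)} \subseteq X$ of size $\geq |X| - 2\epsilon_1 k(q+1)$ with $X^{(1)}+t'k \subseteq R(0)$ for every $t' \in \{2,\ldots,q+1\}$. Map $x \in X^{(1)}$ to $x+qk$ if $x-w \geq r$ and to $x+(q+1)k$ otherwise; both images land in $[v,v+k-1] \cap R(0)$, and the map is injective because the two image subranges $[v,v+(k-r)-1]$ and $[v+(k-r),v+k-1]$ are disjoint. This gives $|R(0) \cap [v,v+k-1]| \geq |X| - 2\epsilon_1 k(q+1) \geq |X| - 2\epsilon_1(v-w+1)k$, using $k(q+1) \leq (v-w+1)k$. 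The only real subtlety is the mod-$k$ bookkeeping in part (iii); the rest is routine counting of bad vertices in short windows.
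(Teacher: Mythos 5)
Your proposal is correct and follows essentially the same route as the paper: part (i) by additivity, part (ii) by chaining the negative $2k$/$3k$ intervals produced at good vertices (the paper organises this as an induction on $t$, you as a $2a+3b$ block decomposition, which is the same mechanism) and charging at most $2\epsilon_1 k$ bad vertices per shift, part (iii) by folding the two halves of $[0,2k-1]$ onto the common target $(t_0+3)k$ further along, and part (iv) by the $qk$/$(q+1)k$ shift into $[v,v+k-1]$. The only elided step is that ``$x$ good $\Rightarrow$ some $E(x,i_0,j_0)$ is entirely negative'' does not follow from the edge count alone: as in the paper one must invoke Lemma~\ref{spreadout} (each edge lies in at most $C_1$ of the sets $E(x,i,j)$) together with $\epsilon_0 C_1<1$, so that fewer than $(k-1)^2$ of the pairs $(i,j)$ can contain a nonnegative edge.
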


\begin{proof}
\begin{enumerate}[\normalfont(i)]
\item This part is immediate from the definition of $R(v)$.

\item First, we deal with the case when $t=2$ or $3$.  The general case will follow by induction.

Suppose that we have $x\in X$.
Since $x$ is good, Lemma~\ref{spreadout} implies that there are at most $\epsilon_0 C_1 k^2$ pairs $i$, $j$ for which $E(x,i,j)$ contains a nonnegative edge.  Therefore, since $\epsilon_0 C_1<1$, there must be at least one pair $i_0,j_0$ for which all the edges in $E(x,i_0,j_0)$ are nonnegative.  Combining this with parts (i) and (ii) of Lemma~\ref{nonnegativeintervals} implies that we have 
\begin{equation}\label{2and3negative}
f([v,x+2k-1]), f([v,x+3k-1])<0.
\end{equation}

If $t=2$ we let $X'= X\cap(G_{\Hyper{H}}-2k)$.   The identity \ref{2and3negative} implies that $X'+2k\subseteq R(v)$.  By condition (iii) of Claim~\ref{transverse}, we know that there are at most $2\epsilon_1  k$ bad vertices in $[w+2k,  w+4k-1]$, which implies that $|X'|\geq |X|- 2\epsilon_1  k $.

Similarly, if $t=3$ we let $X'= X\cap(G_{\Hyper{H}}-2k)\cap(G_{\Hyper{H}}-3k)$.   The identity \ref{2and3negative} implies that $X'+2k, X'+3k\subseteq R(v)$.  By condition (iii) of Claim~\ref{transverse}, we know that there are at most $3\epsilon_1  k$ bad vertices in $[w+2k,  w+5k-1]$, which implies that $|X'|\geq |X|- 3\epsilon_1  k $.

Suppose that the claim holds for $t=t_0$ for some $t_0\geq 3$.  We will show that it holds for $t=t_0+1$.  We know that there is a set $X'\subseteq X + t_0k$, such that we have $|X'|\geq |X|- \epsilon_1  kt_0 $ and $X'+t'k \subseteq R(v)$ for  $t'=2, \dots, t_0$.  Applying the $t=2$ part of this claim to $X'+t_0k$ we obtain a set $X''\subseteq X'$ such that $|X''|\geq  |X'|- \epsilon_1   k \geq |X|-  \epsilon_1  k(t_0+1) $ and also $X''+(t_0+1)k\subseteq R(v)$.  This proves the claim by induction.

\item Apply part (i) to $X+t_0$ with $t=3$ to obtain a set $X'$ with $|X'|\geq |X|- 3\epsilon_1 k$ and $X'+t_0k+\{2k,3k\}\subseteq R(0)$.  Let $X''=X'\bmod(k)$ to obtain a set satisfying $X''\subseteq X\bmod(k)$ and $|X''|\geq |X\bmod(k)|- 3\epsilon_1 k$.  We have that $X''+t_0+3k= (X'\cap [0,k-1]+t_0+3k)\cup (X'\cap[k,2k-1]+t_0+2k)\subseteq X'+t_0+\{2k,3k\}\subseteq R(0)$.

\item Apply part (i) to $X$ with $t=\left\lfloor\frac{v-w}{k}\right\rfloor+1$ to obtain a set $X'$ with $|X'|\geq |X|- \epsilon_1 \left(\left\lfloor\frac{v-w}{k}\right\rfloor+1\right)k$ and $X'+t'k\subseteq R(0)$ for any $t'=2, \dots, \left(\left\lfloor\frac{v-w}{k}\right\rfloor +1\right) k$.  For any $x\in X'$, either $x+\left\lfloor\frac{v-w}{k}\right\rfloor k$ or $x+ \left(\left\lfloor\frac{v-w}{k}\right\rfloor +1\right)k$ is in $[v,v+k-1]\cap R_0$, which implies that $|R(0) \cap [v,v+k-1]|\geq |X'|\geq |X|-\epsilon_1(v-w+1)k$.
\end{enumerate}
\end{proof}

\newcommand{\Qp}[2]{Q^-_{#1}(#2)}
\newcommand{\Qm}[2]{Q^+_{#1}(#2)}
\newcommand{\Qe}[2]{Q_{#1}(#2)}

To every vertex $v\in I$ and $\epsilon>0$, we assign sets $\Qm{\epsilon}{v}$, $\Qp{\epsilon}{v}$, $\Qe{\epsilon}{v} \subseteq [1, k-1]$ as follows.

\begin{align*}
\Qp{\epsilon}{v}&= \{j \in [1,k-1]:  f([v+i,v+i+j-1])<0 \\
& \ \ \ \ \ \ \ \ \ \ \ \ \ \ \ \ \ \ \ \ \ \ \ \ \ \ \ \ \ \ \ \ \ \ \text{ for at least } \epsilon k \text{ numbers } i\in [1, k-1]\} \\
\Qm{\epsilon}{v}&= \{k-j \in [1,k-1]:  f([v+i,v+i+j-1])\geq 0 \\
& \ \ \ \ \ \ \ \ \ \ \ \ \ \ \ \ \ \ \ \ \ \ \ \ \ \ \ \ \ \ \ \ \ \ \text{ for at least } \epsilon k \text{ numbers } i\in [1, k-1]\}\\ 
\Qe{\epsilon}{v}&=\Qp{\epsilon}{v}\cup \Qm{\epsilon}{v}\cup\{0\}. 
\end{align*}

$\Qe{\epsilon}{v}$ has the following basic properties.
\begin{claim} \label{basicQ}
The following hold.
\begin{enumerate}[\normalfont(i)]
\item For any $r\in[0,k]$, we have $\Qe{2\epsilon}{v}\subseteq \Qe{\epsilon}{v-r}\cup \Qe{\epsilon}{v-r+k}$.
\item For $\epsilon\leq\frac{1}{2}$, $x\in  [1,k-1]$, and $v\in I$ either $x$ or $k-x$ is in $\Qe{\epsilon}{v}$.
\item For $\epsilon\leq\frac{1}{2}$ and $v\in I$, we have $|\Qe{\epsilon}{v}|\geq \frac{1}{2}k$.
\end{enumerate}
\end{claim}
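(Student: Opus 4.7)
The plan is to prove the three parts of Claim~\ref{basicQ} in order; parts (ii) and (iii) are short pigeonhole arguments that build directly on the definitions, while the main obstacle is part (i), which requires careful bookkeeping of the index shift $v \mapsto v-r$.

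For part (i), I would unpack the definition and reindex. Fix $j \in \Qp{2\epsilon}{v}$ (the case $j = 0$ is automatic since $0 \in \Qe{\epsilon}{w}$ for every $w$, and the case $j \in \Qm{2\epsilon}{v}$ is symmetric). By definition there are at least $2\epsilon k$ values of $i \in [1, k-1]$ with $f([v+i, v+i+j-1]) < 0$. Writing $v+i = (v-r) + (i+r)$ and setting $i' = i+r$, these intervals become $[(v-r)+i', (v-r)+i'+j-1]$ with $i' \in [r+1, k-1+r]$. Split this range at $k$: by pigeonhole, either at least $\epsilon k$ of the $i'$ lie in $[r+1, k-1] \subseteq [1, k-1]$, which places $j$ directly in $\Qp{\epsilon}{v-r}$; or at least $\epsilon k$ lie in $[k, k-1+r]$, in which case reindexing $i'' = i' - k \in [0, r-1]$ and taking $v-r+k$ as the base point places $j$ in $\Qp{\epsilon}{v-r+k}$.

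For part (ii), I would apply pigeonhole directly. For each $i \in [1, k-1]$ exactly one of $f([v+i, v+i+x-1]) < 0$ or $f([v+i, v+i+x-1]) \geq 0$ holds, and one category gets at least $\lceil (k-1)/2 \rceil \geq \epsilon k$ of the $k-1$ values (using $\epsilon \leq 1/2$). The first case puts $x \in \Qp{\epsilon}{v}$ and the second puts $k - x \in \Qm{\epsilon}{v}$.

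For part (iii), I would partition $[1, k-1]$ into the unordered pairs $\{x, k-x\}$, with a singleton $\{k/2\}$ occurring when $k$ is even; this produces $\lfloor k/2 \rfloor$ blocks. By part (ii) each block meets $\Qe{\epsilon}{v}$, and combined with $0 \in \Qe{\epsilon}{v}$ this yields $|\Qe{\epsilon}{v}| \geq \lfloor k/2 \rfloor + 1 \geq k/2$.
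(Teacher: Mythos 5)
Your proposal is correct and follows essentially the same route as the paper: part (i) is the same pigeonhole argument, splitting the shifted offsets $i+r$ according to whether the base point should be taken as $v-r$ or $v-r+k$, and parts (ii) and (iii) are exactly the pairing/pigeonhole observations the paper dismisses as immediate consequences of the definitions. The only difference is that you spell out the reindexing and the count $\lfloor k/2\rfloor+1$ explicitly, where the paper leaves these details to the reader.
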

\begin{proof}
If $j\in \Qp{2\epsilon}{v}$, then there are at least $2\epsilon k$ numbers $i\in [1, k-1]$ for which $f([v+i,v+i+j-1])<0$.  For every $r\in[0,k]$ the Pigeonhole Principle implies that there must either be at least $\epsilon k$ numbers $i\in [1, k-1]$ for which $f([v-r+i,v-r+i+j-1])<0$ or at least $\epsilon k$ numbers $i\in [1, k-1]$ for which $f([v-r+k+i,v-r+k+i+j-1])<0$.  Therefore we have $\Qp{2\epsilon}{v}\subseteq \Qp{\epsilon}{v-r}\cup \Qp{\epsilon}{v-r+k}$.  Similarly we obtain $\Qm{2\epsilon}{v}\subseteq \Qm{\epsilon}{v-r}\cup \Qm{\epsilon}{v-r+k}$ which implies part (i).

Part (ii) is immediate from the definition of $\Qe{\epsilon}{v}$.  Part (iii) follows from~(ii).
\end{proof}

The following claim shows that for a good vertex $v$, there is a certain translate of $\Qe{\eFirstPow}{v}$ which will nearly be contained in $R(v)$.

\begin{claim} \label{firstpower}
For any good vertex $v$ satisfying $0\leq v\leq  m-5k$, there is a $Q'\subseteq \Qe{\eFirstPow}{v}$ such that $|Q'|\geq  |\Qe{\eFirstPow}{v}| -\epsilon_2 k$ and we have 
\begin{equation*} \label{qe:fp0}
Q'+4k+ v\subseteq R(v).
\end{equation*}
\end{claim}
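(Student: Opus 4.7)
The plan is to decide, for each $q\in\Qe{\eFirstPow}{v}$, whether $v+4k+q$ lies in $R(v)$, splitting into the cases $q\in\Qp{\eFirstPow}{v}$, $q\in\Qm{\eFirstPow}{v}$, and $q=0$. These three cases will be handled by parts (iv), (v), and (iii) of Lemma~\ref{nonnegativeintervals} respectively. Each application needs a pair $(i,j)\in[1,k-1]^2$ such that every edge of $E(v,i,j)$ is negative, together with an auxiliary inequality on $f$; the goodness of $v$ will guarantee enough usable pairs, while condition (iii) of Claim~\ref{transverse} handles the requirement that $v+4k+q$ itself be good.

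First I set $B(v)=\{(i,j)\in[1,k-1]^2 : E(v,i,j) \text{ contains a nonnegative edge}\}$. Because $v$ is good, $E(v)$ contains fewer than $\epsilon_0 k^2$ nonnegative edges, and Lemma~\ref{spreadout} says each of these lies in at most $C_1$ of the sets $E(v,i,j)\cup E^-(v,i,j)$, so $|B(v)|\leq C_1\epsilon_0 k^2$. Call $j\in[1,k-1]$ \emph{heavy} if $|\{i:(i,j)\in B(v)\}|\geq\eFirstPow k$; a simple averaging argument bounds the number of heavy $j$ by $C_1\epsilon_0 k/\eFirstPow = 4C_1\epsilon_0 k$.

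For $q\in\Qp{\eFirstPow}{v}$ with $q$ not heavy, the set $\{i\in[1,k-1]:f([v+i,v+i+q-1])<0\}$ has size at least $\eFirstPow k$, while $\{i\in[1,k-1]:(i,q)\notin B(v)\}$ has size greater than $(k-1)-\eFirstPow k$; the sizes sum to more than $k-1$, so the two sets intersect. Any $i$ in the intersection triggers Lemma~\ref{nonnegativeintervals}(iv) and yields $f([v,v+4k+q-1])<0$. The case $q=k-j'\in\Qm{\eFirstPow}{v}$ with $j'$ not heavy is symmetric, using Lemma~\ref{nonnegativeintervals}(v) together with the identity $v+5k-j'-1=v+4k+q-1$. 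For $q=0$, pick any $(i,j)\notin B(v)$ (possible since $|B(v)|<(k-1)^2$) and apply Lemma~\ref{nonnegativeintervals}(iii) to conclude $f([v,v+4k-1])<0$.

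Finally, let $Q'$ consist of those $q\in\Qe{\eFirstPow}{v}$ for which the preceding step produces $f([v,v+4k+q-1])<0$ and additionally $v+4k+q$ is good. The excluded $q$'s fall into the heavy-$j$ contribution from $\Qp{\eFirstPow}{v}$, the heavy-$j'$ contribution from $\Qm{\eFirstPow}{v}$, and the bad vertices in $[v+4k,v+5k-1]$ (which number at most $\epsilon_1 k$ by hypothesis (iii) of Claim~\ref{transverse}); this totals at most $8C_1\epsilon_0 k+\epsilon_1 k$. I expect the only real nuisance to be the constant bookkeeping, which reduces to checking $8C_1\epsilon_0+\epsilon_1\leq\epsilon_2$ with the chosen numerical values; this is straightforward and the rest is a direct invocation of the tools already developed.
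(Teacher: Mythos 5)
Your proposal is correct and takes essentially the same route as the paper: your ``heavy'' indices are exactly the paper's exceptional set $T$ (bounded via goodness of $v$ and Lemma~\ref{spreadout}), the intersection argument producing a pair $(i,j)$ with all of $E(v,i,j)$ negative feeds into parts (iii)--(v) of Lemma~\ref{nonnegativeintervals}, and the final intersection with good vertices plus the check $8C_1\epsilon_0+\epsilon_1\leq\epsilon_2$ mirrors the paper's bookkeeping (indeed, your separate treatment of the $Q^+$-side, removing $q$ with $k-q$ heavy, is slightly more careful than the paper's literal $\setminus T$).
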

\begin{proof}
Let $T\subseteq [1,k-1]$ be the set of $j\in[1,k-1]$ for which there are at least $\eFirstPow k$ numbers $i \in[1,k-1]$ such that $E(v,i,j)$  contains a nonnegative edge.
We have at least $|T|\eFirstPow k$ pairs $i,j\in[1,k-1]$ for which $E(v,i,j)$ contains a nonnegative edge.
Since $v$ is good, Lemma~\ref{spreadout} implies that at most $\epsilon_0 C_1 k^2$ of the sets $E(v,i,j)$ contain nonnegative edges for $i,j\in [1,k-1]$.
Therefore, we have $|T|\eFirstPow k\leq \epsilon_0 C_1 k^2$.
We define the set $Q'$ as 
$$Q'=\left((\Qp{\eFirstPow}{v}\setminus T)\cup (\Qm{\eFirstPow}{v}\setminus T) \cup\{0\}\right)\cap (G_{\Hyper{H}}-4k).$$

First we prove $Q'+4k+v\subseteq R(v)$.
Suppose that we have $j\in \Qp{\eFirstPow}{v}\setminus T$. From the definition of $T$, there are at at more than $k-1-\eFirstPow k$ numbers $i \in[1,k-1]$ such that all the edges in $E(v,i,j)$  are negative.  From the definition of $\Qp{\eFirstPow}{v}$, there are at least $\eFirstPow k$ numbers $i \in[1,k-1]$  such that $[v+i,v+i+j-1]$ is negative.  Therefore, there is some $i\in[1,k-1]$ such that all the edges in $E(v,i,j)$  are negative and also $[v+i,v+i+j-1]$ is negative.  Part (iv) of Lemma~\ref{nonnegativeintervals} implies that we have $f(v,v+4k+j-1)<0$ and so $(\Qp{\eFirstPow}{v}\setminus T+4k+v)\cap G_{\Hyper{H}}\subseteq R(v)$.   Similarly, using part (v) of Lemma~\ref{nonnegativeintervals}, it is possible to show that $(\Qm{\eFirstPow}{v}\setminus T+4k+v)\cap G_{\Hyper{H}}\subseteq R(v)$. Finally, part (iii) of Lemma~\ref{nonnegativeintervals} implies that we have $(\{0\}+4k+v)\cap G_{\Hyper{H}}\subseteq R(v)$, and hence $Q'+4k+v\subseteq R(v)$.

Now we prove $|\Qe{\eFirstPow}{v}|-\epsilon_2k$.
Since $|T| \leq \ILF{\epsilon_0 C_1 k}{\eFirstPow}$, we must have
\begin{equation}\label{eq:fp1}
|\Qe{\eFirstPow}{v}\setminus T| \geq |\Qe{\eFirstPow}{v}| -\frac{\epsilon_0 C_1}{\eFirstPow} k.
\end{equation}
Condition (iii) of Claim~\ref{transverse} implies that  
\begin{equation} \label{eq:fp2}
|Q'|\geq |\Qe{\eFirstPow}{v}\setminus T|- \epsilon_1 k.
\end{equation}
Now, (\ref{eq:fp1}), (\ref{eq:fp2}) and $\epsilon_2\geq  \ILF{\epsilon_0 C_1}{\eFirstPow} + \epsilon_1$ imply $|Q'|\geq |\Qe{\eFirstPow}{v}|-\epsilon_2k$, proving the claim.
\end{proof}

\begin{definition}
For $S\subseteq A\times B$ we define $$A +_S B = \{a+b : (a,b)\in S\}.$$
\end{definition}
The following claim shows that for a certain large set  $S$,  a translate of $\Qe{\eFirstPow}{0} +_S \Qe{2\eFirstPow}{7k}$ is contained in $R(0)$.

\begin{claim}\label{Qsumset}
There is a set $S\subseteq \Qe{\eFirstPow}{0} \times \Qe{2\eFirstPow}{7k}$ such that $|S|\geq  |\Qe{\eFirstPow}{0}\times \Qe{2\eFirstPow}{7k}|-\eQSum k^2$ and we have 
$$\big(\Qe{\eFirstPow}{0} +_S \Qe{2\eFirstPow}{7k}\big) + 13k \subseteq R(0).$$
\end{claim}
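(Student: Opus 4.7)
The plan is to chain three uses of Claim~\ref{firstpower} together, using Claim~\ref{basicR} to shift and Claim~\ref{basicQ}(i) as the key combinatorial glue. Intuitively, Claim~\ref{firstpower} at vertex $0$ only produces elements of $\Qe{\eFirstPow}{0}$, so to bring in the second factor $\Qe{2\eFirstPow}{7k}$ we must re-apply Claim~\ref{firstpower} at two good vertices lying near $7k$, namely $q_1+6k$ and $q_1+7k$, and then appeal to Claim~\ref{basicQ}(i) to see that $\Qe{2\eFirstPow}{7k}$ is covered by the union $\Qe{\eFirstPow}{q_1+6k}\cup\Qe{\eFirstPow}{q_1+7k}$.

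First I would apply Claim~\ref{firstpower} at the good vertex $0$ to obtain $Q_0\subseteq\Qe{\eFirstPow}{0}$ with $|Q_0|\geq|\Qe{\eFirstPow}{0}|-\epsilon_2 k$ and $Q_0+4k\subseteq R(0)$. Since $Q_0+4k\subseteq R(0)\cap[4k,5k-1]$, Claim~\ref{basicR}(ii) with $t=3$ produces $\tilde Q_0\subseteq Q_0$ of size at least $|\Qe{\eFirstPow}{0}|-(\epsilon_2+6\epsilon_1)k$ with both $\tilde Q_0+6k$ and $\tilde Q_0+7k$ contained in $R(0)$; in particular, for every $q_1\in\tilde Q_0$ both of the vertices $q_1+6k$ and $q_1+7k$ are good and lie inside $I$.

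Next I would apply Claim~\ref{firstpower} at each of $q_1+6k$ and $q_1+7k$. At $q_1+7k$ it yields $Q^{(7)}_{q_1}\subseteq\Qe{\eFirstPow}{q_1+7k}$ with $Q^{(7)}_{q_1}+q_1+11k\subseteq R(q_1+7k)\subseteq R(0)$ (using Claim~\ref{basicR}(i)), and one more use of Claim~\ref{basicR}(ii) with $t=2$ shifts this by $2k$ to produce $\tilde Q^{(7)}_{q_1}+q_1+13k\subseteq R(0)$ at a cost of at most $(\epsilon_2+4\epsilon_1)k$ discarded elements. Symmetrically, Claim~\ref{firstpower} at $q_1+6k$ followed by Claim~\ref{basicR}(ii) with $t=3$ gives $\tilde Q^{(6)}_{q_1}\subseteq\Qe{\eFirstPow}{q_1+6k}$ with $\tilde Q^{(6)}_{q_1}+q_1+13k\subseteq R(0)$, losing at most $(\epsilon_2+6\epsilon_1)k$ elements. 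The covering $\Qe{2\eFirstPow}{7k}\subseteq\Qe{\eFirstPow}{q_1+6k}\cup\Qe{\eFirstPow}{q_1+7k}$ supplied by Claim~\ref{basicQ}(i) with $v=7k$ and $r=k-q_1\in[0,k]$ then lets me set $S_{q_1}=\Qe{2\eFirstPow}{7k}\cap(\tilde Q^{(6)}_{q_1}\cup\tilde Q^{(7)}_{q_1})$; any $q_2$ missed by $S_{q_1}$ must have been discarded by one of the two branches, so $|\Qe{2\eFirstPow}{7k}\setminus S_{q_1}|\leq(2\epsilon_2+10\epsilon_1)k$ while $q_1+S_{q_1}+13k\subseteq R(0)$ by construction.

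Finally, $S=\{(q_1,q_2):q_1\in\tilde Q_0,\ q_2\in S_{q_1}\}$ is contained in $\Qe{\eFirstPow}{0}\times\Qe{2\eFirstPow}{7k}$, satisfies the required translate property, and its complement has size at most $(\epsilon_2+6\epsilon_1)k\cdot k+k\cdot(2\epsilon_2+10\epsilon_1)k=(3\epsilon_2+16\epsilon_1)k^2$, which is far below $\eQSum k^2=10^{-4}k^2$ by the chosen constants. The main obstacle I anticipate is purely bookkeeping: one must verify that every intermediate image $q_1+tk$ for $t\in\{6,7,10,11,13\}$ still lies in $I=[0,m]$ and that the range hypothesis $v\leq m-5k$ of Claim~\ref{firstpower} holds for each of $v\in\{0,q_1+6k,q_1+7k\}$, both of which follow immediately from $q_1\leq k-1$ and $m\geq C_3 k=28k$.
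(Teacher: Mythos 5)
Your proposal is correct and follows essentially the same route as the paper: Claim~\ref{firstpower} at $0$ and at the good vertices $q_1+6k$, $q_1+7k$, chained through $R(0)$ via Claim~\ref{basicR}(i)--(ii), with Claim~\ref{basicQ}(i) covering $\Qe{2\eFirstPow}{7k}$ by $\Qe{\eFirstPow}{q_1+6k}\cup\Qe{\eFirstPow}{q_1+7k}$, and the same product-set counting. The only difference is bookkeeping (you shift the second-stage sets from $+4k$ up to a total of $+13k$, while the paper packages the $+\{6k,7k\}$ shift into a single set $Q_v$ for every good $v$), and your error terms comfortably satisfy the $\eQSum k^2$ budget.
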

\begin{proof}

For every good vertex $v\in I$, Claim~\ref{firstpower} combined with part (ii) of Claim~\ref{basicR} implies that there is a set $Q_v\subseteq \Qe{\eFirstPow}{v}$ such that  we have  $Q_v+v+\{6k, 7k\} \subseteq R(v)$ and also 
\begin{equation}\label{QvSize}
|Q_v|\geq |\Qe{\eFirstPow}{v}|-\efour k.
\end{equation}

Now, part (i) of Claim~\ref{basicR} implies that we have
\begin{equation}\label{eq:sumset1}
\bigcup_{v\in R(0)\cap [6k, 8k-1] }R(v) \subseteq R(0).
\end{equation}

Combining $Q_v+v+\{6k,7k\} \subseteq R(v)$ with
(\ref{eq:sumset1}) implies that we have
\begin{equation}\label{eq:sumset3}
\bigcup_{v\in (Q_0+\{6k,7k\})}(Q_v +v + \{6k, 7k\})\subseteq R(0).
\end{equation}

We let 
$$S=\{(a,b)\in \Qe{\eFirstPow}{0}\times \Qe{2\eFirstPow}{7k} : a\in Q_0 \text{ and } b\in Q_{a+6k}\cup Q_{a+7k}\}.$$

The identity (\ref{eq:sumset3}) implies that we have 

\begin{align*}
 \Qe{\eFirstPow}{0} +_S \Qe{2\eFirstPow}{7k} +13k &=\{a+b:a\in Q_0 \text{ and }\\
 & \ \ \ \ \ \ \ \ \ \ \ \ \ \ \ \ \  b\in (Q_{a+6k}\cup  Q_{a+7k})\cap \Qe{2\eFirstPow}{7k}\}+13k\\
 &\subseteq \{a+b:a\in Q_0 \text{ and } b\in Q_{a+6k}\cup Q_{a+7k}\}+13k \\
 &=\left(\bigcup_{a\in Q_{0}+6k} Q_a+a+7k \right)\cup \left(\bigcup_{a\in Q_{0}+7k} Q_a+a+6k \right) \\
 &\subseteq \bigcup_{a\in (Q_0+\{6k,7k\})}(Q_a +a + \{6k, 7k\})\\
 &\subseteq R(0).
\end{align*}

Now we prove $|S|\geq  |\Qe{\eFirstPow}{0}\times \Qe{2\eFirstPow}{7k}|-\eQSum k^2$.
Notice that for each $a \in [0,  k-1]$,  part (i) of Claim~\ref{basicQ} implies 
\begin{equation} \label{eqQdoubling}
\Qe{2\eFirstPow}{7k}\subseteq \Qe{\eFirstPow}{a+6k}\cup \Qe{\eFirstPow}{a+7k} \text{ for all } a\in \Qe{\eFirstPow}{0}.
\end{equation}

The identity (\ref{eqQdoubling}) combined with (\ref{QvSize}) and $Q_v\subseteq \Qe{\eFirstPow}{v}$ implies that for all $a\in[1,k-1]$ we have
\begin{align*}
|(Q_{a+6k}\cup Q_{a+7k})\cap \Qe{2\eFirstPow}{7k}|&\geq |(\Qe{\eFirstPow}{a+6k}\cup \Qe{\eFirstPow}{a+7k}) \cap \Qe{2\eFirstPow}{7k}| \\
& \hspace{6.5cm} - 14\epsilon_1+2\epsilon_2) k \\
&= |\Qe{2\eFirstPow}{7k}| -(14\epsilon_1+2\epsilon_2) k.
\end{align*}

This gives us 
\begin{align*}
|S| &= \sum_{a\in Q_0} | (Q_{a+6k}\cup Q_{a+7k}) \cap \Qe{2\eFirstPow}{v}| \\
    &\geq \sum_{a\in Q_0} \Big(|\Qe{2\eFirstPow}{7k}| - (14\epsilon_1+2\epsilon_2) k\Big) \\
    &\geq \Big(|\Qe{\eFirstPow}{0}|-\efour k\Big)\Big(|\Qe{2\eFirstPow}{7k}| - (14\epsilon_1+2\epsilon_2) k\Big) \\
    &\geq  |\Qe{\eFirstPow}{0}\times \Qe{2\eFirstPow}{7k}|- (21\epsilon_1+3\epsilon_2)k^2  \\
    &\geq |\Qe{\eFirstPow}{0}\times \Qe{2\eFirstPow}{7k}|- \eQSum k^2.
\end{align*}
The second last inequality follows from $|\Qe{\eFirstPow}{0}|$, $|\Qe{2\eFirstPow}{7k}|\leq k$.
The last inequality follows from $\eQSum \geq 21\epsilon_1+3\epsilon_2$.
\end{proof}

Claim~\ref{Qsumset} is combined with the following.

\begin{claim}\label{sumsetgrowth}
Suppose that $A$ and $B \subseteq \mathbb{Z}_k$, and satisfy that for any $x\in \mathbb{Z}_k$ , either $x$ or $-x\in A$ and either $x$ or $-x \in B$.  Let $S\subseteq A\times B$ be a set satisfying $|S|\geq |A\times B| - \ethree^2k^2$.  We have 
$$|A+_S B|\geq \left(\frac{1}{2}+\esix \right) k .$$
\end{claim}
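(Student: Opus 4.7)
The plan is to argue by contradiction, reducing the statement to a Pollard-type level-set bound on the representation function
\[ r(c) := |\{(a, b) \in A \times B : a + b = c\}| = |A \cap (c - B)|, \qquad c \in \mathbb{Z}_k. \]

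I would start by collecting the consequences of the symmetry hypotheses. Since each pair $\{x, -x\} \subseteq \mathbb{Z}_k$ meets both $A$ and $B$, one has $|A|, |B| \geq \lceil (k+1)/2 \rceil$, hence $|A| + |B| \geq k + 1$; the inclusion-exclusion bound $|A \cap (c-B)| \geq |A| + |B| - k$ then yields $r(c) \geq 1$ for every $c$, so $A + B = \mathbb{Z}_k$ and $\sum_c r(c) = |A||B|$. I would also record the ``symmetry identities'' that will drive the proof: for each $a \in A$ with $c - a \notin B$, the symmetry of $B$ forces $a - c \in B$, giving $r(c) + d(c) \geq |A|$ where $d(c) := |A \cap (B + c)|$, and analogously $r(c) + \rho(c) \geq |B|$ for $\rho(c) := |A \cap (B - c)|$ using the symmetry of $A$. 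Combined with $\sum_c d(c) = \sum_c \rho(c) = |A||B|$, these identities already yield the weak level-set estimate $|\{c : r(c) \leq j\}| \leq |A||B|/(|A|-j)$, which I will need to upgrade.

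Next I would set up the contradiction. Assuming $|A +_S B| < (\tfrac{1}{2} + \epsilon_4) k$ and letting $D = \mathbb{Z}_k \setminus (A +_S B)$, one has $|D| > 0.4k$, and all representations of each $c \in D$ lie in $T := (A \times B) \setminus S$, so double counting gives
\[ \sum_{c \in D} r(c) \;\leq\; |T| \;\leq\; \epsilon_3^2 k^2 = 10^{-4} k^2. \]
The task is now to derive a contradiction by showing that the $\lceil 0.4 k \rceil$ smallest values of $r$ must sum to strictly more than $10^{-4} k^2$. The crucial step, and the main obstacle, is to strengthen the weak level-set bound into the Pollard-type linear bound $|\{c \in \mathbb{Z}_k : r(c) \leq j\}| \leq 2j$ for every integer $j \geq 1$. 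In $\mathbb{Z}_p$ with $p$ prime this is classical Pollard; here the argument is more delicate because $k$ may be composite and $A + B$ may have a nontrivial stabilizer. My plan is to iterate a Kneser/Pollard-style doubling argument along cosets of $\mathrm{Stab}(A+B)$, using the symmetry identities to control the periodic configurations where equality can be attained and where the naive $|A||B|/(|A|-j)$ bound is too loose.

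Once the level-set bound $|\{r \leq j\}| \leq 2j$ is in hand, the $i$-th smallest value of $r$ is at least $\lceil i/2 \rceil$, so
\[ \sum_{c \in D} r(c) \;\geq\; \sum_{i=1}^{\lceil 0.4 k \rceil} \lceil i/2 \rceil \;\geq\; \tfrac{1}{4}(0.4 k)^2 = 0.04\, k^2, \]
which dwarfs $\epsilon_3^2 k^2 = 10^{-4} k^2$, giving the contradiction. The generous gap between $0.04 k^2$ and $10^{-4} k^2$ explains why the constants in this claim are so loose: the binding constraints on $\epsilon_3$ and $\epsilon_4$ come from earlier applications of these quantities in the main proof, not from this last step.
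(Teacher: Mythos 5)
Your setup is sound and in fact mirrors the paper's final step: the double count showing that every element outside $A+_S B$ has all of its representations inside $(A\times B)\setminus S$, so that elements with many representations cannot be missing, is exactly how the paper passes from a bound on $|(A+B)_{\epsilon_3 k}|$ (the elements with at least $\epsilon_3 k$ representations) to a bound on $|A+_S B|$. But the heart of the claim is precisely the step you defer: you need that few elements of $\mathbb{Z}_k$ have few representations, and your proposed bound $|\{c: r(c)\leq j\}|\leq 2j$ is announced only as a ``plan'' to ``iterate a Kneser/Pollard-style doubling argument along cosets of $\mathrm{Stab}(A+B)$.'' That is not a proof, and it is exactly where all the difficulty lives. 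Your ``weak'' estimate $|\{c:r(c)\leq j\}|\leq |A||B|/(|A|-j)$ is vacuous in the relevant range (for $j$ of order $\epsilon_3 k$ it gives nothing better than the trivial bound $k$), so the entire quantitative content of the claim rests on the unproven linear level-set bound. Note also that even in the prime case the bound $\leq 2j$ does not drop out of Pollard's inequality by straightforward summation: Pollard gives $\sum_c\min(r(c),t)\geq t(|A|+|B|-t)$, and converting this to a pointwise level-set bound loses a constant factor (one gets roughly $4j$, attained after optimizing $t$), so you would either need the equality structure of Pollard-type results or be content with a weaker linear constant (which, to be fair, your $0.04k^2$ versus $10^{-4}k^2$ slack would tolerate---but you must still prove \emph{some} linear bound).

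The composite-$k$ issue you flag is a genuine obstruction, not a technicality: periodic configurations (sets that are unions of cosets of a subgroup $H$) are exactly where Pollard-type statements fail or degenerate, and ruling them out requires using the hypothesis that $A$ and $B$ meet every pair $\{x,-x\}$. The paper handles this by invoking Grynkiewicz's weighted extension of Kneser's theorem: either the summed level-set bound $\sum_{i\leq t}|(A+B)_i|\geq t|A|+t|B|-2t^2+1$ holds with $t=2\epsilon_3 k$, or one obtains $A'\subseteq A$, $B'\subseteq B$ with $A'+B'=(A+B)_t$, to which Kneser's theorem is applied; the possible stabilisers are then classified via Lagrange's theorem, and the only dangerous case (stabiliser of index $2$, $k$ even) is killed by observing that the symmetry hypothesis forces $A$ (and $B$) to contain at least $k/4$ even and $k/4$ odd elements, so $A'$ and $B'$ each meet both cosets and $A'+B'=\mathbb{Z}_k$. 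Your sketch would have to reproduce an argument of this type (or cite a comparable result valid in arbitrary $\mathbb{Z}_k$, such as Grynkiewicz's or Hamidoune--Serra-type theorems) and show explicitly where the symmetry of $A$ and $B$ excludes the periodic extremal configurations; as written, the proposal assumes the key inequality rather than proving it.
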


When $k$ is prime, Claim~\ref{sumsetgrowth} follows from a theorem due to Lev~\cite{Lev}, which itself is closely related to a theorem due to Pollard~\cite{Pollard}.
In order to prove Claim~\ref{sumsetgrowth}, we will need some results from additive combinatorics.
We define $$(A+B)_i= \{x\in \mathbb{Z}_k: x=a+b \text{ for at least } i \text{ distinct pairs } (a, b)\in A\times B\}.$$  Notice that we have  $(A+B)_{i+1}\subseteq (A+B)_{i}$.

The proof of Claim~\ref{sumsetgrowth} will use the following theorem due to Grynkiewicz.

\begin{theorem}[Grynkiewicz, \cite{Grynk}] \label{gryn}
Let  $A$ and $B \subseteq \mathbb{Z}_k$ and $t\leq k$.  We have one of the following.
\begin{enumerate} [\normalfont(i)]
\item  The following holds.
\begin{equation}\label{eq:gryn1}
\sum_{i=1} ^t |(A+B)_i| \geq t|A| + t|B| - 2t^2 + 1.
\end{equation}
\item There are sets $A'\subseteq A$ and $B'\subseteq B$ such that $|A\setminus A'| + |B \setminus B'| \leq t-1$ and we have $ A' + B' = (A+B)_t$.
\end{enumerate} 
\end{theorem}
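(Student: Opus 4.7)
The plan is to prove the theorem by induction on $t$, following the strategy of Grynkiewicz's original argument in \cite{Grynk} which combines a Pollard-style multiplicity analysis with the Kemperman structure theorem. First I would rewrite the target quantity via the representation function $r(x) = |\{(a,b) \in A \times B : a+b = x\}|$, using the identity $\sum_{i=1}^t |(A+B)_i| = \sum_{x \in \mathbb{Z}_k} \min(r(x), t)$. This recasts the sum as a truncated convolution, letting us peel off individual representations cleanly during the induction and also making the role of $(A+B)_t$ in alternative (ii) transparent (it is precisely the set where the truncation is active).

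The base case $t = 1$ holds trivially via alternative (ii) with $A' = A$ and $B' = B$, since $|A \setminus A'| + |B \setminus B'| = 0 \leq t - 1$ and $A' + B' = (A+B)_1$ by definition. For the inductive step, apply the theorem at level $t-1$ and split on which alternative occurs. If alternative (i) holds at level $t-1$, so that $\sum_{i=1}^{t-1} |(A+B)_i| \geq (t-1)|A| + (t-1)|B| - 2(t-1)^2 + 1$, it remains only to establish $|(A+B)_t| \geq |A| + |B| - (4t-3)$. This follows by peeling a carefully chosen $(t-1)$-element subset from each of $A$ and $B$ and applying Kneser's theorem to the residual sumset, controlling the stabilizer term by a standard argument.

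The harder case is when alternative (ii) holds at level $t-1$, producing $A'' \subseteq A$ and $B'' \subseteq B$ with $|A \setminus A''| + |B \setminus B''| \leq t - 2$ and $A'' + B'' = (A+B)_{t-1}$. Every element of $A'' + B''$ has at least $t-1$ representations in $A \times B$, and the question becomes which elements gain a $t$-th representation from the extra points in $A \setminus A''$ and $B \setminus B''$. If every element of $A'' + B''$ is so promoted, alternative (ii) at level $t$ follows by removing at most one additional element from each of $A''$ and $B''$. Otherwise a substantial subset of $A'' + B''$ fails to reach multiplicity $t$; a careful count of the forced extra contributions to $\sum_{x} \min(r(x), t)$ coming from these failures (together with the representations contributed by $A \setminus A''$ and $B \setminus B''$ themselves) delivers the quantitative estimate in alternative (i).

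The main obstacle lies in this structural branch: controlling the promotion-failure pattern requires the full strength of Kemperman's structure theorem, which in the tight equality regime forces $A''$ and $B''$ to be quasi-periodic, that is, near-unions of cosets of a common subgroup or near-arithmetic progressions. Tracing how the extra elements of $A \setminus A''$ and $B \setminus B''$ interact with this rigid structure is what converts the level-$(t-1)$ dichotomy into the level-$t$ one and produces precisely the $-2t^2 + 1$ error term. The quantitative leak of $2t$ per inductive step, accumulated over $t$ levels, is the source of the $2t^2$ deficit; verifying that no better bound is possible in the structured regime is the key bookkeeping step of the argument.
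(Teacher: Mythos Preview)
The paper does not prove Theorem~\ref{gryn}: it is quoted verbatim from Grynkiewicz's paper \cite{Grynk} and used as a black box in the proof of Claim~\ref{sumsetgrowth}. There is therefore no ``paper's own proof'' of this statement to compare your proposal against.

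As for the proposal itself, the overall shape (the Pollard identity $\sum_{i=1}^t |(A+B)_i| = \sum_x \min(r(x),t)$, induction on $t$, and an appeal to Kemperman's structure theorem in the tight case) is indeed the architecture of Grynkiewicz's argument. Two remarks, though. First, a small arithmetic slip: from the $(t-1)$-level inequality you need $|(A+B)_t| \ge |A|+|B|-(4t-2)$, not $-(4t-3)$. Second, and more substantively, the step ``peel a carefully chosen $(t-1)$-element subset from each of $A$ and $B$ and apply Kneser to the residual sumset'' hides the real work: removing $t-1$ points from each side does not in itself force the residual sumset to land inside $(A+B)_t$, and controlling the stabilizer after such a removal is exactly where the Kemperman structural machinery enters. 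Your outline gestures at this but does not actually engage with it; as written it is a plausible table of contents rather than a proof.
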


We define the \emph{stabiliser} of a set $X\in \mathbb{Z}_k$ to be $Stab(X)=\{y\in \mathbb{Z}_k: y+X=X\}$.  
We use the following theorem due to Kneser.

\begin{theorem} [Kneser, \cite{Kneser}] \label{kneser}
Let $A$ and $B \subseteq \mathbb{Z}_k$ and $H$ the stabiliser of $A+B$ in~$\mathbb{Z}_k$.  We have
\begin{equation} \label{eq:kneser} 
|A+B|\geq |A+ H| + |B+H| -|H|. 
\end{equation}
\end{theorem}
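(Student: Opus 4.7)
My plan is to prove Kneser's theorem by induction on $|B|$, using the Dyson $e$-transform as the main combinatorial device.

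The first step would be to reduce to the case $\mathrm{Stab}(A+B) = \{0\}$. Since $H = \mathrm{Stab}(A+B)$ stabilises $A+B$, both $A+B$ and the sets $A+H$, $B+H$ are unions of $H$-cosets. Passing to the quotient $\pi \colon \mathbb{Z}_k \to \mathbb{Z}_k/H$ and writing $\bar A = \pi(A)$, $\bar B = \pi(B)$, we have $|A+B| = |H|\cdot|\bar A + \bar B|$, $|A+H| = |H|\cdot|\bar A|$ and $|B+H| = |H|\cdot|\bar B|$, while the stabiliser of $\bar A + \bar B$ in the quotient group is trivial by the very definition of $H$. Dividing through by $|H|$, Kneser's inequality reduces to the Cauchy--Davenport style statement: if $\mathrm{Stab}(A+B) = \{0\}$ in an abelian group, then $|A+B| \geq |A| + |B| - 1$.

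The second step would be to prove this reduced form by induction on $|B|$. The base case $|B| = 1$ is immediate since $A+B$ is then a translate of $A$. For the inductive step with $|B| \geq 2$, I would introduce the Dyson transform: fixing $b_0 \in B$, for each $a \in A$ set $e = a - b_0$ and define
\[
A_e = A \cup (B + e), \qquad B_e = B \cap (A - e).
\]
A short verification yields $|A_e| + |B_e| = |A| + |B|$, $A_e + B_e \subseteq A + B$, $A \subseteq A_e$ and $b_0 \in B_e$. I would then split into two cases. In the \emph{saturated} case, where $|B_e| = |B|$ for every $a \in A$, we have $B + (a - b_0) \subseteq A$ for every $a \in A$, so $B - b_0 \subseteq \mathrm{Stab}(A) \subseteq \mathrm{Stab}(A+B) = \{0\}$, forcing $|B| = 1$ and contradicting our assumption. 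Otherwise some $a \in A$ yields $|B_e| < |B|$, and I would apply the inductive hypothesis to $(A_e, B_e)$ to conclude $|A + B| \geq |A_e + B_e| \geq |A_e| + |B_e| - 1 = |A| + |B| - 1$.

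The main obstacle is that when one applies the inductive hypothesis to $(A_e, B_e)$, the stabiliser $H_e = \mathrm{Stab}(A_e + B_e)$ may be nontrivial, so the reduced form cannot be invoked directly. The standard fix is to set up the induction on the \emph{general} form of Kneser's theorem rather than the reduced form: the inductive hypothesis then yields $|A_e + B_e| \geq |A_e + H_e| + |B_e + H_e| - |H_e|$, and one must transfer this to a bound on $|A+H| + |B+H| - |H|$ using $A \subseteq A_e$, the cardinality identity $|A_e| + |B_e| = |A| + |B|$, and the inclusion $A_e + B_e \subseteq A + B$. Managing this stabiliser bookkeeping --- showing that the inequality obtained for $(A_e, B_e)$ actually implies the desired inequality for $(A, B)$, by comparing $H_e$ with $H$ and tracking the $H$-coset structure of the relevant sumsets --- is the delicate technical step that constitutes the heart of the argument.
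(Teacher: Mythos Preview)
The paper does not prove Kneser's theorem; it is quoted as a classical result from the literature and used as a black box inside the proof of Claim~\ref{sumsetgrowth}. So there is no ``paper's own proof'' to compare against.

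As for your outline itself: the Dyson $e$-transform approach you describe is indeed the standard route, and the reduction to trivial stabiliser, the base case, and the saturated-case analysis are all correct. However, you have correctly identified---and left open---the one genuinely nontrivial step: when you apply the inductive hypothesis to $(A_e,B_e)$, you obtain $|A_e+B_e|\ge |A_e+H_e|+|B_e+H_e|-|H_e|$ with a possibly nontrivial $H_e$, and you must convert this into the desired inequality for $(A,B)$. This is not mere bookkeeping. The usual argument observes that since $A_e+B_e\subseteq A+B$ and $H=\mathrm{Stab}(A+B)=\{0\}$ (after the reduction), one has $H_e$ acting on a subset of $A+B$ but not on $A+B$ itself; one then uses $|A_e+H_e|+|B_e+H_e|\ge |A_e|+|B_e|+|H_e|-1$ (since $A_e$, $B_e$ are not both unions of $H_e$-cosets when $H_e\ne\{0\}$, or else $H_e$ would stabilise $A+B$) together with $|A_e|+|B_e|=|A|+|B|$. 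Without this step explicitly carried out, your proposal is an accurate sketch of the strategy but not yet a proof.
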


Sumsets in Claim~\ref{sumsetgrowth}, Theorem~\ref{gryn} and Theorem~\ref{kneser} are all in $\mathbb{Z}_k$.

\begin{proof} [Proof of Claim~\ref{sumsetgrowth}]

Notice that since $x$ or $-x\in A, B$, we must have $|A|, |B|\geq \frac{1}{2}k$ .
Our initial goal will be to show that we have 
\begin{equation}\label{eqABt}
|(A+B)_{\ethree k}|\geq  \left(\frac{1}{2}+\esix + \ethree\right)k.
\end{equation}

Apply Theorem~\ref{gryn} to $A$ and $B$ with $t=2 \ethree k$.  We split into two cases, depending on which part of Theorem~\ref{gryn} holds.

\begin{enumerate} [\normalfont(i)]
 \item  
  Suppose that (\ref{eq:gryn1}) holds.  Since we are working over $\mathbb{Z}_k$ in this claim, we have $|(A+B)_i|\leq k$.  Combining this with (\ref{eq:gryn1}) implies
\begin{align*}
 \sum_{i=\ethree k} ^{2\ethree k} |(A+B)_i| &\geq 2\ethree k\Big(|A| +  |B|-4\ethree k\Big)+1- \sum_{i=1} ^{\ethree k-1} |(A+B)_i|\\
 &\geq \ethree k\Big(2|A| +  2|B|-(1+8\ethree)k\Big).
\end{align*}
This, together with $(A+B)_{i+1}\subseteq (A+B)_{i}$ implies that we have
\begin{equation*}
 |(A+B)_{\ethree k}|\geq 2|A|+2|B|-(1+8\ethree)k.
\end{equation*}
The identity (\ref{eqABt})  follows since we have $|A|, |B|\geq \frac{1}{2}k$  and $1-8\ethree \geq \ILF{1}{2}+\esix + \ethree$.

\item Suppose that we have two sets $A'$ and $B'$ as in  part (ii) of Theorem~\ref{gryn}.  
Apply Theorem~\ref{kneser} to the sets $A'$ and $B'$.  

Note that $|A\setminus A'| + |B \setminus B'| \leq t-1$ together with (\ref{eq:kneser}) and  $|A|, |B|\geq \frac{1}{2}k$ implies that we have 
\begin{align}
|(A+B)_{\ethree k}| &\geq |(A+B)_{2\ethree k}| \notag\\
&=|A' + B'| \notag\\
&\geq |A'+Stab(A'+B')|+|B'+Stab(A'+B')|-|Stab(A'+B')|\label{eqSymBound1}\\
&\geq |A|+|B|-|Stab(A'+B')|-2\ethree k \notag\\
&\geq (1-2\ethree)k-|Stab(A'+B')|. \label{eqSymBound2}
\end{align}
If $|Stab(A'+B')|\leq \frac{1}{3}k$, then (\ref{eqABt}) follows (\ref{eqSymBound2}) combined with $1-2\ethree-\ILF{1}{3}\geq \ILF{1}{2}+\esix + \ethree$.

Otherwise, Lagrange's Theorem implies that $Stab(A'+B')$ is either all of $\mathbb{Z}_k$ or that~$k$ is even and $Stab(A'+B')$ is the set of even elements of $\mathbb{Z}_k$.  
If  $Stab(A'+B') = \mathbb{Z}_k$ holds, then we have $A'+Stab(A'+B')=B'+Stab(A'+B')= \mathbb{Z}_k$.  Substituting this into (\ref{eqSymBound1}) implies that we have $|(A+B)_{\ethree k}|=k$ and so (\ref{eqABt}) holds.  

Suppose that $Stab(A'+B')$ consists of all the even elements of $\mathbb{Z}_k$.  Since for every~$x$, either $x$ or $-x \in A$, there are at least $\frac{1}{4}k$ even elements in $A$, and at least $\frac{1}{4}k$ odd elements in $A$.  Therefore, since $|A'|\geq |A|-2\ethree k$, $A'$ must contain an even element and an odd element.  This implies that $A'+Stab(A'+B')=\mathbb{Z}_k$. Similarly $B'+Stab(A'+B')=\mathbb{Z}_k$.  Thus  (\ref{eqSymBound1}) implies that  we have $|(A+B)_{\ethree k}|=k$ and so (\ref{eqABt}) holds.    
\end{enumerate}

Now, we use (\ref{eqABt}) to deduce the claim.
Let $T=(A+B)_{\epsilon_3 k}\setminus (A+_S B)$.  We have $|A+_S B|+|T|\geq |(A+B)_{\ethree k}|$.
Notice that from the definition of $(A+B)_{\ethree k}$ we have $\ethree k |T|+|S|\leq |A\times B|$.
  This, combined with (\ref{eqABt}) and $|S|\geq |A\times B| - \ethree^2 k^2$ implies that we have  
  \begin{align*}
  |A+_S B|&\geq |(A+B)_{\ethree k}|-|T| \\
  &\geq |(A+B)_{\ethree k}|-\frac{1}{\ethree k}(|A\times B|-|S|)\\
  &\geq  |(A+B)_{\ethree k}|-\ethree k\\
  &\geq  \left(\frac{1}{2} + \esix\right)k.
 \end{align*}
 \end{proof}

Claims~\ref{Qsumset} and \ref{sumsetgrowth} cannot be directly combined since sumsets in Claim~\ref{Qsumset} are in $\mathbb{Z}$ whereas sumsets in Claim~\ref{sumsetgrowth} are in $\mathbb{Z}_k$.
However, Claim~\ref{Qsumset} gives us a set $S$ such that $|S|\geq  |\Qe{\eFirstPow}{0}\times \Qe{2\eFirstPow}{7k}|-\eQSum k^2$ and we have 
$\big(\Qe{\eFirstPow}{0} +_S \Qe{2\eFirstPow}{7k}\big) + 13k \subseteq R(0).$  
Part (iii) of Claim~\ref{basicR} implies that there is a subset $Q'\subseteq (\Qe{\eFirstPow}{0} +_S \Qe{2\eFirstPow}{7k})\bmod(k)$ such that $Q'+16k\subseteq R(0)$ and we have
\begin{equation}\label{eqm1}
|Q'|\geq |(\Qe{\eFirstPow}{0} +_S \Qe{2\eFirstPow}{7k})\bmod(k)|-3\epsilon_1k.  
\end{equation} 
By Claim~\ref{sumsetgrowth} and part (ii) of Claim~\ref{basicQ}, we have 
\begin{equation}\label{eqm2}
|(\Qe{\eFirstPow}{0} +_S \Qe{2\eFirstPow}{7k})\bmod(k)|\geq  \left(\frac{1}{2}+ \esix\right)k.
\end{equation} 
Combining (\ref{eqm1}) and (\ref{eqm2}) implies that $|R(0)\cap [16k, 17k-1]|\geq \left(\ILF{1}{2}+ \esix-3\right)\epsilon_1k$.  Applying part (iv) of Claim~\ref{basicR} implies that for any $w\in I$, we have
\begin{equation*}
|R(0)\cap [w,w+k-1]|\geq \bigg(\frac{1}{2}+ \esix- \epsilon_1\left(\frac{w}{k}+4\right)\bigg) k. 
\end{equation*}
Combining this with $m\leq (4C_2+C_3)k$ gives
\begin{align}\label{eq0}
|R(0)\cap [m-17k,m-16k-1]|\geq \left(\frac{1}{2}+\esix -\epsilon_1(4C_2+C_3+4)\right).
\end{align}
We can define $R^-(v)= \{u \in I \cap G_{\Hyper{H}} : f([u+1, v]) < 0 \}$.  By symmetry, we obtain 
\begin{equation}\label{eqm}
|R^-(m)\cap [m-17k,m-16k-1]|\geq \left(\frac{1}{2}+\esix-3\epsilon_1\right)k.
\end{equation}
Now, (\ref{eq0}), (\ref{eqm}), and $\esix>\epsilon_1(4C_2+C_3+4)$ imply that we have 
\begin{align*}
|R(m)\cap [m-17k,m-16k-1]|&>\frac{1}{2}k, \\
|R^-(m)\cap [m-17k,m-16k-1]|&>\frac{1}{2}k.
\end{align*}
Therefore, there is some $v\in [m-17k,m-16k-1]$ such that $v\in R(0)$ and $v-1\in R^-(m)$.  By definition of $R(0)$ and $R(m)$ we obtain $f(I)<0$.
\end{proof}
As mentioned before, Claims \ref{cover} and \ref{transverse} imply the theorem.
\end{proof}

\section{Remarks}\label{MMSRemarksSection}
In this section we discuss some further directions one might take with our approach to Conjecture~\ref{MMSConjecture}.

\begin{itemize}
\item
The constant $10^{46}$ in Theorem~\ref{MMSMainTheorem} can  certainly be improved by being more careful in the proof.  The main question is whether a better choice of hypergraphs $\Hyper{H}_{n,k}$ can lead to a solution to Conjecture~\ref{MMSConjecture}.
It is not clear what kind of hypergraphs one should look for.  Although in the above theorem, the hypergraphs $\Hyper{H}_{n,k}$ are quite sparse, this does not seem to be crucial in the proof.  

\item 
The constant ``$10^{46}$'' cannot be reduced to ``$4$'' in Theorem~\ref{MMSMainTheorem} without changing the graphs $\Hyper{H}_{n,k}$.  Indeed for large $k$, the graphs $\Hyper{H}_{5(k-1), k}$ do not have the MMS-property.  To see this, consider the following function $f:V(G)\to \mathbb{R}$.
\begin{align*}
f(i)&=k-2 \text{ if } i\equiv 0 \pmod{k-1},\\
f(i)&=-1 \text{ if } i\not\equiv 0 \pmod{k-1}.
\end{align*}
It is easy to see that we have $\sum_{x \in V(G)} f(x)=0$.   For two vertices $i$ and $j$ let
$$p(i,j)=\begin{cases}\text{The number of edges of } \Hyper{H}_{5(k-1), k} \text{ containing }i\text{ and }j &\text{ if }i\neq j \\
          0 &\text{ if } i=j.
         \end{cases}$$
The graph $\Hyper{H}_{5(k-1), k}$ has five nonnegative vertices $0, k-1,2(k-1), 3(k-1), 4(k-1)$.  An edge $e\in \Hyper{H}_{5(k-1), k}$ is nonnegative if and only if $e$ contains at least two of these vertices.  Therefore the number of nonnegative edges in $\Hyper{H}_{5(k-1), k}$ is at most
\begin{equation}
\frac{1}{2}\sum_{\substack{i,j\in\{0, k-1,2(k-1),\\ \ \ \ \ \ \ 3(k-1), 4(k-1)\}}} p(i,j)=5p(0,k-1)+ 5p(0,2(k-1)).
\end{equation}
Notice that an edge $e(-v,i,j)$ contains both $0$ and $k-1$ if and only if we have
\begin{align}
i&\geq v+1, \label{vijdisc1}\\
j&\geq v, \label{vijdisc2}\\
i+j&\geq v+k-1.\label{vijdisc3} 
\end{align}
It's easy to check that the number of triples $(v,i,j)$ which satisfy (\ref{vijdisc1}) -- (\ref{vijdisc3}) is less than $\frac{1}{6}k^3+o(k^3)$, which implies that $p(0,k-1)=\frac{1}{6}k^3+o(k^3)$.

The only edges $\Hyper{H}_{5(k-1), k}$ which contain $0$ and $2(k-1)$ are of the form $e(0,i,k-1)$ for some $i$, so we have that $p(0,2(k-1))=k-1$.  Therefore, there are less than $\frac{5}{6}k^3+o(k^3)$ nonnegative edges in $\Hyper{H}_{5(k-1), k}$ which is smaller than $k(k-1)^2$ for large enough $k$.

The above argument shows that the constant ``$10^{46}$'' in Theorem~\ref{MMSMainTheorem}  cannot be reduced to less than $5$.  This shows that  Conjecture~\ref{MMSConjecture} cannot be solved by the argument we used in this \body without changing the graphs  $\Hyper{H}_{n,k}$ to some other construction.

\item
We conclude with the following general problem.
\begin{problem}\label{MMSProperty}
Which hypergraphs have the MMS-property?
\end{problem}
This problem is probably quite hard, since a solution to it would mean a generalization of Conjecture~\ref{MMSConjecture}.  However, perhaps looking for hypergraphs which have the MMS-property would lead to improved bounds on Conjecture~\ref{MMSConjecture}.

\end{itemize}

%\input{MMSPropertyRon}
%\subsection{Appendix}
%\input{MMSAppendix}

\bigskip\noindent
\textbf{Acknowledgment}

\smallskip\noindent
The author would like to thank  Peter Allen, Jan van den Heuvel,  Jozef Skokan, and Benny Sudakov for their advice and discussions.

\bibliography{pathpartition}

\begin{thebibliography}{10}

\bibitem{Sudakov}
N.~Alon, H.~Huang, and B.~Sudakov.
\newblock Nonnegative k-sums, fractional covers, and probability of small
  deviations.
\newblock {\em J. Combinatorial Theory Ser. B}, 102:784--796, 2012.

\bibitem{CombinatoricsBook}
B.~Bollob\'as.
\newblock {\em Combinatorics}.
\newblock Cambridge University Press, 1986.

\bibitem{ErdosKoRado}
P.~Erd\H{o}s, C.~Ko, and R.~Rado.
\newblock Intersection theorem for system of finite sets.
\newblock {\em Quart. J. Math. Oxford Ser.}, 12:313--318, 1961.

\bibitem{Marino}
G.~C. G.~Marino.
\newblock A method to count the positive 3-subsets in a set of real numbers
  with non-negative sum.
\newblock {\em European J. Combin.}, 23:619--629, 2002.

\bibitem{Grynk}
D.~J. Grynkiewicz.
\newblock On extending {P}ollard's theorem for t-representable sums.
\newblock {\em Israel J. Math}, 177(1):413--439, 2010.

\bibitem{Katona}
G.~O.~H. Katona.
\newblock A simple proof of the {E}rd{\H o}s-{C}hao {K}o-{R}ado {T}heorem.
\newblock {\em J. Combin. Theory Ser. B}, 13:183--184, 1972.

\bibitem{Kneser}
M.~Kneser.
\newblock Absch\"atzung der asymptotischen {D}ichte von {S}ummenmengen.
\newblock {\em Math. Z}, 58:459--484, 1953.

\bibitem{Lev}
V.~F. Lev.
\newblock Restricted set addition in groups, ii. a generalization of the
  {E}rd{o}s-{H}eilbronn conjecture.
\newblock {\em Electron. J. Combin.}, 7, 2000.

\bibitem{ManickamSmalln}
N.~Manickam.
\newblock {\em On the distribution invariants of association schemes}.
\newblock PhD thesis, Ohio State University, 1986.

\bibitem{Manickam}
N.~Manickam and D.~Mikl\'os.
\newblock On the number of non-negative partial sums of a non-negative sum.
\newblock {\em Colloq. Math. Soc. J\'anos Bolyai}, 52:385--392, 1987.

\bibitem{Singhi}
N.~Manickam and N.~Singhi.
\newblock First distribution invariants and {EKR} theorems.
\newblock {\em J. Combin. Theory Ser. A}, 48:91--103, 1988.

\bibitem{Pollard}
J.~M. Pollard.
\newblock A generalisation of the {T}heorem of {C}auchy and {D}avenport.
\newblock {\em J. London Math. Soc.}, 8(2):460--462, 1974.

\bibitem{Tyomkyn}
M.~Tyomkyn.
\newblock An improved bound for the {M}anickam-{M}ikl\'os-{S}inghi conjecture.
\newblock {\em European J. Combin.}, 33(1):27--32, 2012.

\end{thebibliography}
\bibliographystyle{abbrv}
\end{document}